\documentclass[12pt]{amsart}
\usepackage[mathscr]{eucal}
\usepackage{amssymb}
\usepackage{latexsym}
\usepackage{amsthm}
\usepackage[pagewise]{lineno}

\pagestyle{plain}

\theoremstyle{plain}

\newtheorem*{theo}{Theorem}

\newtheorem{lem}{Lemma}[section]
\newtheorem*{cor}{Corollary}

\numberwithin{equation}{section}

\setlength{\textwidth}{15.5cm}
\setlength{\textheight}{21cm}
\setlength{\oddsidemargin}{0cm}
\setlength{\evensidemargin}{0cm}

\title[Another proof of the corona theorem]{Another proof of the corona theorem}
\author{Jun-ichi Tanaka}

\medskip

\dedicatory{Dedicated to the memory of Junzo and Sadako Wada}

\address{Department of Mathematics, School of Education, Waseda
University, Shinjuku, Tokyo 169-8050, Japan}

\email{jtanaka@waseda.jp}

\date{Finally revised on June 21, 2024}

\keywords{Corona theorem, Cluster value theorem, Hoffman maps, Interpolating sequences}

\subjclass[2020]{Primary 30H80; Secondary 30H10, 46J10, 46J20, 37A46.}



\begin{document}
\maketitle

\begin{abstract}
Let $H^\infty(\Delta)$ be the uniform algebra of
bounded analytic functions on the open unit disc
$\Delta$, and let $\mathfrak{M}(H^\infty)$
be the maximal ideal space of $H^\infty(\Delta)$.
By regarding $\Delta$ as an open subset of $\mathfrak{M}(H^\infty)$,
the corona problem asks whether $\Delta$ is dense in
$\mathfrak{M}(H^\infty)$, which was solved
affirmatively by L. Carleson. Extending the cluster value theorem
to the case of finitely many functions, we provide a direct proof
of the corona theorem:
Let $\phi$ be a homomorphism in
$\mathfrak{M}(H^\infty)$, and
let $f_1, f_2, \dots, f_N$ be functions in $H^\infty(\Delta)$. Then there is a
sequence $\{\zeta_j\}$ in $\Delta$ satisfying
$f_k(\zeta_j) \rightarrow \phi (f_k)$ for $k=1, 2, \dots, N$. On the other hand,
the corona problem remains unsolved in many general settings, for instance, certain plane domains,
polydiscs and balls, our approach is so natural that it may be possible to deal with such cases
from another point of view.

\end{abstract}

\bigskip
\bigskip
\section{Introduction}
The corona problem was posed by S. Kakutani in 1941 and
finally settled in 1962 by L. Carleson \cite{C}, where he introduced
important techniques to solve the problem.
Many new methods have been exploited since then, especially,
T. Wolff \cite{G4} presented a new proof of the corona theorem in 1979.
However, the author learned from O. Hatori that Kakutani had often
said there would be a simple proof of the corona problem.
Indeed, E. L. Stout also wrote in \cite[p 32 ]{S}: \textit{Carleson's proof uses
only classical analysis. It would be of great interest to have
a solution to the corona problem that
draws less on classical methods and more on algebraic analysis,
but to the best of my knowledge, no such proof has been discovered yet.}
Our approach may make headway to some degree in this direction. Roughly
speaking, the corona problem could be solvable only with the knowledge
of Hoffman's book \cite[Chapter 10]{H}.

The usual Lebesgue and Hardy spaces in the unit circle $\mathbf{T}$ are
denoted by $L^p(\mathbf{T})$ and $H^p(\mathbf{T}), 1\le p\le \infty$, respectively.
We usually identify $\mathbf{T}$ with $[0,2\pi)$ and, for a function
$f$ on $\mathbf{T}$, we write $f(\theta)$ for $f(e^{i\theta})$.
By boundary value identification, $H^\infty(\Delta)$ may be considered as
the closed subalgebra $H^\infty(\mathbf{T})$ of $L^\infty(\mathbf{T})$.
Regarding $H^\infty(\Delta)$ as a uniform
algebra on $\mathfrak{M}(H^\infty)$, we observe that its Shilov boundary $X$
is the maximal ideal space $\mathfrak{M}(L^\infty)$ of
$L^\infty(\mathbf{T})$, which is totally disconnected.
Denote by $m$ the normalized Lebesgue measure
$dm(\theta) = d\theta /2\pi$ on $\mathbf{T}$. Since
$L^\infty(\mathbf{T})$ is identified with $C(X)$, $m$ is lifted to
a measure $\widehat{m}$ on $X$, where each measurable set
$E$ in $\mathbf{T}$ corresponds to an open-closed
subset $U(E)$ of $X$ . Then the family $\{U(E)\}$ of all such
open-closed subsets forms a basis for the topology of $X$, and
satisfies that $m(E)=\widehat{m}(U(E))$ and $\widehat{m}(U(E))>0$
unless $U(E) = \emptyset$
(see \cite[Chapter I]{G1}). Recall that the Poisson kernel
for $z$ in $\Delta$ is given by
$P_z(\theta)\,=\,Re [\,(e^{i\theta}+z)/(e^{i\theta}-z)]$,
so the Poisson integral by $P_z(\theta)dm(\theta)$ is also
regarded as a measure on $X$.
From now on, we identify
each function in $H^\infty(\Delta)$ with its Gelfand transform,
and regard $H^\infty(\Delta)$ as a uniformly closed
subalgebra of $C(\mathfrak{M}(H^\infty))$.

When $\alpha$ is in ${\mathbf{T}}$, the {\it fiber\/} $\mathfrak{M}_{\alpha}$ of $\mathfrak{M}(H^\infty)$  over $\alpha$ is defined to be
$$
\mathfrak{M}_{\alpha}\;=\;\left\{\xi\in \mathfrak{M}(H^\infty) \;; \; \xi(z)\,=\,\alpha \right\},
$$
where $z$ is the coordinate function. For each function $f$ in $H^\infty(\Delta)$,
the \textit{cluster set} of $f$ at $\alpha$ is
$$
{\it Cl}(f,\alpha)\;=\;\bigcap_{r>0}\,\overline{f(\Delta\cap \{\,\vert z-\alpha\vert < r\})}.
$$
Then the \textit{cluster value theorem} asserts that
\begin{equation}
\textit{Cl}\,(f,\alpha)\;=\;f(\mathfrak{M}_{\alpha}), \quad f \in H^\infty(\Delta),
\label{1.1}
\end{equation}
consequently, if $\phi$ is in $\mathfrak{M}_{\alpha}$, then
there is a sequence $\{\zeta_j\}$ in $\Delta$ satisfying
$\zeta_j \rightarrow \alpha$ and $f(\zeta_j) \rightarrow f(\phi)$.
Notice that this sequence $\{\zeta_j\}$ depends on $f$ and $\phi$,
however, the same property holds on the uniformly closed subalgebra of $H^\infty(\Delta)$ generated by $f$.
Recall that the open unit disc $\Delta$ is homeomorphically embedded in
$\mathfrak{M}(H^\infty)$ by identifying each $z$ in $\Delta$
with the point evaluation $\phi_z(f)\,=\,f(z)$
(see \cite[Chapter 10]{H}).
We then have the decomposition
$$
\mathfrak{M}(H^\infty)\setminus \Delta\;=\;\bigcup_{\vert\alpha\vert = 1}\,
\mathfrak{M}_{\alpha} .
$$
Since $\mathfrak{M}_{\alpha}$ is a peak set
with peaking function $(1+\bar{\alpha}z)/2$, the restriction of $H^\infty(\Delta)$ to
$\mathfrak{M}_{\alpha}$ is a uniform algebra on $\mathfrak{M}_{\alpha}$, which is denoted by $A_\alpha$. Then the Shilov boundary of $A_\alpha$ is $X\cap \mathfrak{M}_{\alpha}$ (see \cite[187p - 193p]{H} for
the algebras $A_\alpha$). Each $\phi$ in $\mathfrak{M}_\alpha$
has a unique representing measure $\mu$ on $X\cap \mathfrak{M}_{\alpha}$ with minimal support $S_{\phi}$ (see \cite[Chapter II, Theorem 2.3]{G1} and
 \cite[Chapter 10, Exercise 4]{H} for minimal support sets).
Since various fibers are homeomorphic to one another,
we restrict our attention to the fiber
$\mathfrak{M}_1$ over $1$ to look into the structure of fringe
$\mathfrak{M}(H^\infty)\setminus \Delta$.

\medskip
Our objective in this note is to provide a strong version of the cluster value theorem (\ref{1.1}), from which the corona theorem follows directly:
\begin{theo}
\label{theo}
Let $B(\mathfrak{F})$ be the uniformly closed subalgebra of $H^\infty(\Delta)$ generated by a countable family $\mathfrak{F}$
of $H^\infty(\Delta)$. If $\phi$ is a homomorphism in the fiber $\mathfrak{M}_1$ over $z=1$, then there is a sequence
 $\{\zeta_j\}$ in $\Delta$, depending on $\mathfrak{F}$ and $\phi$, such that
\begin{equation}
\label{eq1.2}
\zeta_j \rightarrow 1 \qquad
and \qquad f(\zeta_j) \rightarrow f(\phi)
\end{equation}
for each $f$ in
$B(\mathfrak{F})$.
\end{theo}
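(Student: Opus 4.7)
The natural plan is a two-stage reduction: first, from $B(\mathfrak{F})$ to the generating family $\mathfrak{F}$ itself, and second, from the countable family to finite subfamilies via diagonalization. Enumerate $\mathfrak{F} = \{f_1, f_2, \dots\}$ (repeating entries if $\mathfrak{F}$ is finite). The core step is to establish the following finite version of the cluster value theorem: for each $n$ and each $\varepsilon > 0$ there exists $\zeta \in \Delta$ with $|\zeta - 1| < \varepsilon$ and $|f_k(\zeta) - f_k(\phi)| < \varepsilon$ simultaneously for $k = 1, \dots, n$. Granting this, I choose such a $\zeta_n$ corresponding to $\varepsilon = 1/n$; the resulting sequence satisfies $\zeta_n \to 1$ and, for every fixed $k$, $f_k(\zeta_n) \to f_k(\phi)$. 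A routine three-$\varepsilon$ argument, using uniform approximation of elements of $B(\mathfrak{F})$ by polynomials in $f_1, f_2, \dots$ together with the contractivity $|\phi(f)| \leq \|f\|_\infty$ of a homomorphism, upgrades the convergence to every $f \in B(\mathfrak{F})$.

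To attack the finite version, I would work with representing measures on the Shilov boundary. The homomorphism $\phi \in \mathfrak{M}_1$ carries a unique representing measure $\mu$ supported in $X \cap \mathfrak{M}_1$ with minimal support $S_\phi$, and each evaluation $\phi_z$ for $z \in \Delta$ is represented by the Poisson measure $P_z\,dm$ lifted to $X$. The Poisson integral formula gives $g(z) = \int g\, P_z\, dm$ for every $g \in H^\infty(\Delta)$. Hence, if I can produce a net $\{z_\lambda\} \subset \Delta$ with $z_\lambda \to 1$ for which the lifted measures $P_{z_\lambda} dm$ converge weak-$*$ on $C(X)$ to $\mu$, then $g(z_\lambda) \to g(\phi)$ holds for every $g \in H^\infty(\Delta)$. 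Since only the finitely many coordinate functions $f_1, \dots, f_n$ need to be controlled, their joint image lies in a compact metrizable subset of $\mathbb{C}^n$, from which an actual sequence can be extracted out of the net.

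The principal obstacle, and the essential new content, is constructing such an approximating net---equivalently, showing that every representing measure of every $\phi \in \mathfrak{M}_1$ lies in the weak-$*$ closure of $\{P_z\, dm : z \in \Delta,\ |z-1| < r\}$ for every $r > 0$. Here I would lean on the Hoffman-map machinery of \cite[Chapter 10]{H}: for $\phi$ with nontrivial Gleason part, the Hoffman map $L_\phi : \Delta \to \mathfrak{M}_1$ satisfying $L_\phi(0) = \phi$ is built as a pointwise limit of M\"obius self-maps of $\Delta$ based at points tending to $1$, and each composition $f_k \circ L_\phi$ lies in $H^\infty(\Delta)$ with value $f_k(\phi)$ at the origin; shrinking the argument in the Hoffman disk toward $0$ and diagonally extracting against the finitely many $f_k$ should produce genuine points of $\Delta$ of the required kind. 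The remaining case of trivial Gleason part, where $\phi \in X \cap \mathfrak{M}_1$, would call for a separate argument relying on the minimal support $S_\phi$ together with the single-function cluster value theorem (\ref{1.1}) applied inside the finitely generated uniform subalgebra of $H^\infty(\Delta)$ spanned by $f_1, \dots, f_n$.
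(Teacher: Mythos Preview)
Your reduction from $B(\mathfrak{F})$ to a finite list $f_1,\dots,f_n$ and the three-$\varepsilon$ upgrade are fine; this is exactly the content of Lemma~\ref{lem2.3}. The real difficulty is the ``finite version'' itself, and here your plan is circular.

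You propose, for $\phi$ with nontrivial Gleason part, to use that the Hoffman map $L_\phi$ with $L_\phi(0)=\phi$ arises as a pointwise limit of M\"obius maps $L_{c_n}$ with $c_n\in\Delta$, $c_n\to 1$. But that statement is precisely the ``only if'' direction of Hoffman's characterization of analytic discs, and its proof \emph{uses} the corona theorem (see the remark following \eqref{eq2.1} in Section~\ref{S2}). What one has without the corona theorem is Wermer's embedding $\tau:\Delta\to P(\phi)$, which is abstract and gives no points of $\Delta$ at all; knowing that $\tau$ is approximated by genuine M\"obius self-maps of $\Delta$ is exactly what you are trying to prove. Likewise, your proposed route through weak-$*$ approximation of $\mu$ by Poisson measures $P_z\,dm$ on all of $C(X)$ would, if carried out, show $\phi$ lies in the closure of $\Delta$ in $\mathfrak{M}(H^\infty)$---again the corona theorem itself.

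A second gap: your dichotomy ``nontrivial part'' versus ``$\phi\in X\cap\mathfrak{M}_1$'' is incomplete. There exist one-point Gleason parts in $\mathfrak{M}_1\setminus X$, and for those neither branch of your argument applies. (The easy case $\phi\in X$ is Lemma~\ref{lem2.1}; the hard case is all of $\mathfrak{M}_1\setminus X$, regardless of part type.)

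The paper avoids circularity by never assuming $\phi$ is approachable from $\Delta$. For $\phi\in\mathfrak{M}_1\setminus X$, Newman's theorem supplies a Blaschke product $B_0$ with $B_0(\phi)=0$. Simple-function approximations to $\mu$ (Lemmas~\ref{lem3.1}--\ref{lem3.3}) manufacture a concrete thin sequence $\{c_n\}\subset\Delta$ with $c_n\to 1$, and Lemma~\ref{lem4.3} modifies $B_0$ to a Blaschke product $B$ with $B(\phi)=0$ and $B\circ L_{n_j}\to G(\zeta)=\zeta$ uniformly on compacta. The punch line (Section~\ref{S5}) is that, after the change of variable $L_{k(j)}$, the approximating measures are forced---because the limit algebra contains the disc algebra and $B(\phi)=0=G(0)$---to converge to the \emph{unique} representing measure $dm$ at $0$; hence $h(\phi)=H(0)=\lim_j h(c_{k(j)})$ for every $h$ in the algebra generated by $f_1,\dots,f_N,B$. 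The Blaschke product $B$ is the missing ingredient in your sketch: it rigidifies the limit so that ``some representing measure on the subalgebra'' becomes ``the Poisson measure at a specific point of $\Delta$''.
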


Let us make a comment on Theorem.
It is not necessary that the homomorphism $\phi$
lies in the closure of $\{\zeta_j\}$ in $\mathfrak{M}(H^\infty)$,
in other words, there may exist $h$ in $H^\infty(\Delta)$ with the
property that $h(\phi) = 1$ while $\vert h(\zeta_j)\vert < 1/2$ for
 $j=1,2,\cdots.$ Of course, each $f$ in $B(\mathfrak{F})$ values constant $f(\phi)$ on the set of adherent points
of $\{\zeta_j\}$ in $\mathfrak{M}(H^\infty)$. Since $\{\zeta_j\}$ may be
chosen to be sufficiently sparse, there appears a relation between
interpolating sequences and analytic discs (see Section 2 for details).

Recall that a basic neighborhood of $\phi$ in
$\mathfrak{M}(H^\infty)$ is given by
\begin{equation}
\label{eq1.3}
W(\phi, f_1,\cdots, f_N, \varepsilon)\,=\,\left\{ \xi \in
\mathfrak{M}(H^\infty)\,;\, \vert f_k(\xi)- f_k(\phi)\vert <\varepsilon, \;
k=1,2,\cdots, N \,\right\}\,,
\end{equation}
for $\varepsilon > 0$ and for $f_1, f_2, \cdots, f_N$  in
$H^\infty(\Delta)$. By definition, the family of all such neighborhoods
forms a basis for the (weak-star) topology of $\mathfrak{M}(H^\infty)$.
Since $W(\phi, f_1, \cdots, f_N, \varepsilon)
\cap \Delta \neq \emptyset$ by Theorem, it follows immediately
that the open set $\Delta$ is dense in $\mathfrak{M}(H^\infty)$. This fact is interpreted as a well-known formulation in function theory:

\begin{cor}
If $f_1, f_2, \cdots, f_N $ in $H^\infty(\Delta)$ satisfy
$$
\vert f_1(z) \vert +\vert f_2(z) \vert + \cdots +\vert f_N(z) \vert
\ge \delta > 0\,,\quad z \in \Delta\,,
$$
then there exist $g_1, g_2, \cdots, g_N $ in $H^\infty(\Delta)$
such that
$$
f_1(z)g_1(z) +f_2(z)g_2(z)  + \cdots +f_N(z)g_N(z) \,\equiv \,1\,,\quad z \in \Delta\,.
$$
\end{cor}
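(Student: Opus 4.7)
The plan is to deduce the corollary from the density of $\Delta$ in $\mathfrak{M}(H^\infty)$---which is the remark placed immediately before the corollary and rests on the Theorem---by the classical ideal-theoretic translation. The argument is by contradiction: failure of the B\'ezout identity is shown to force the existence of a homomorphism annihilating every $f_k$, after which density of $\Delta$ produces a point where $\sum_k |f_k|$ is strictly smaller than $\delta$.

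First I would assume that $|f_1(z)| + \cdots + |f_N(z)| \ge \delta > 0$ on $\Delta$ but that no tuple $(g_1,\ldots,g_N) \in H^\infty(\Delta)^N$ satisfies $\sum_k f_k g_k \equiv 1$. Then the ideal
\[
I \;=\; f_1 H^\infty(\Delta) \,+\, f_2 H^\infty(\Delta) \,+\, \cdots \,+\, f_N H^\infty(\Delta)
\]
is a proper ideal of $H^\infty(\Delta)$, since $1 \in I$ would furnish exactly such a tuple. Because $H^\infty(\Delta)$ is a commutative unital Banach algebra, $I$ is contained in some maximal ideal, which in turn is the kernel of a homomorphism $\phi \in \mathfrak{M}(H^\infty)$. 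Consequently $\phi(f_k) = 0$ for $k = 1, 2, \ldots, N$.

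Next I would invoke density of $\Delta$ at this $\phi$. The basic weak-star neighborhood $W(\phi, f_1, \ldots, f_N, \delta/(2N))$ from \eqref{eq1.3} meets $\Delta$, so there exists $z \in \Delta$ with
\[
|f_k(z)| \;=\; |f_k(z) - \phi(f_k)| \;<\; \delta/(2N), \qquad k = 1, 2, \ldots, N.
\]
Summing over $k$ gives $\sum_{k=1}^N |f_k(z)| < \delta/2 < \delta$, contradicting the hypothesis.

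Strictly speaking there is no genuine obstacle, since the entire content of the corollary has already been absorbed into the Theorem and its consequence that $\Delta$ is dense in $\mathfrak{M}(H^\infty)$. The only point worth flagging is the appeal to the Banach-algebra fact that every proper ideal in a commutative unital Banach algebra lies inside the kernel of some character; this is a standard application of Zorn's lemma together with the Gelfand--Mazur theorem and involves none of the subtleties specific to $H^\infty(\Delta)$.
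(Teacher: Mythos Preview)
Your argument is correct and is precisely the standard ideal-theoretic deduction the paper has in mind when it says that the density of $\Delta$ in $\mathfrak{M}(H^\infty)$ ``is interpreted as a well-known formulation in function theory''; the paper does not spell out a separate proof of the corollary beyond this remark. Your choice of $\varepsilon = \delta/(2N)$ and the appeal to the Gelfand--Mazur/Zorn mechanism are exactly what is needed and nothing more.
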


It would be helpful to understand the basic idea behind
our proof of the corona theorem. Let $\phi$ be a homomorphism in
the fiber $\mathfrak{M}_1$, and let $\mu$ be the representing
measure for $\phi$. Then the minimal support $S_\phi$ for $\mu$
is contained in $X \cap \mathfrak{M}_1$.
Since $f_1,f_2, \cdots, f_N $ in $H^\infty(\Delta)$ are continuous
on $X$, we may choose disjoint open-closed subsets
$U_i=U(E_i), i = 1, 2, \dots ,\ell,$ of $X$ such that $S_{\phi} = \cup_{i=1}^{\,\ell} (U_i \cap S_{\phi})$
and all $f_k$ vary little on each $U_i$. Here $E_i$ denotes the
measurable set in $\mathbf{T}$ corresponding to $U_i$.
Denoting by $\chi_{E_i}$  the characteristic function of $E_i$.
we then choose a nonnegative simple function of the form
$s(\theta) \,=\, \sum_{i=1}^{\,\ell} a_i \chi_{E_i}(\theta)$
satisfies that $\int_{\mathbf{T}} s(\theta) dm(\theta) = 1$
and the value of
\begin{equation*}
\left\vert \int_{S_{\phi}} f_k(x) d\mu(x) - \int_{\mathbf{T}} \,
f_k(\theta) s(\theta) dm(\theta) \right\vert
\end{equation*}
is as small as desired. Therefore there is
a sequence $\{s_j(\theta)\}$ of such simple functions satisfying
\begin{equation*}
\lim_{j\to \infty} \left\vert \phi(f_k) -
\int_{\mathbf{T}} \, f_k(\theta) s_j(\theta)\,
dm(\theta)  \right\vert \;=\; 0\,.
\end{equation*}
With the aid of certain Blaschke products, we then see that
$s_j(\theta) dm(\theta)$ is close to the Poisson
integral of $\zeta_j$ asymptotically on the algebra
generated by $f_1,f_2, \cdots, f_N $.
This shows that the sequence $\{\zeta_j\}$ in $\Delta$
satisfies
\begin{equation*}
\lim_{j\to \infty} \left\vert f_k(\zeta_j) -
\int_{\mathbf{T}} \, f_k(\theta) s_j(\theta)
dm(\theta) \right\vert \;=\; 0\,,
\end{equation*}
from which the conclusion of Theorem follows (see
Lemma \ref{lem2.3} and Section 5 for more details).

\medskip
In the next section, we establish some notation and elementary
facts on the structure of $\mathfrak{M}(H^\infty)$. In Section 3,
among other things, Hoffman maps are discussed by the relation to
interpolating sequences in $\Delta$.
Section 4 is devoted to constructing auxiliary Blaschke products
of which zeros determine desired sequences.
In Section 5, the proof of Theorem is provided.
We close with two remarks in Section 6.

\medskip
We refer the reader to \cite{A}, \cite{C}, \cite{D}, \cite[Chapter VIII]{Ga}
and \cite[Appendix 3]{N} for further details and recent developments
on the corona problem.
Related results concerning the Hardy space theory can be found in
\cite{G1}, \cite{Ga}, \cite{H} and \cite{N}.

5

\bigskip
\bigskip
\section{Analytic discs and Hoffman maps}
\label{S2}
We begin with showing that the Shilov boundary $X$ of
$H^\infty(\Delta)$ is contained in the
closure of $\Delta$ in $\mathfrak{M}(H^\infty)$, which is well-known. This
fact enables us to restrict our attention to the homomorphisms lying in
$\mathfrak{M}(H^\infty)\setminus X$.

\begin{lem}
\label{lem2.1}
Let $\phi$ be a homomorphism in $\mathfrak{M}(H^\infty)$,
and let $f_1, f_2, \cdots, f_N $ be functions in $H^\infty(\Delta)$.
Denote by $S_\phi$ the minimal support of representing measure $\mu$
for $\phi$. If $f_1, f_2, \cdots, f_N $ are constant on $S_\phi$,
then we have
$$
W(\phi, f_1, \cdots, f_N, \varepsilon)
\cap \Delta \neq \emptyset,
$$
for any \,$\varepsilon >0 $.
Consequently, the Shilov boundary X lies in the
closure of $\Delta$ in $\mathfrak{M}(H^\infty)$.
\end{lem}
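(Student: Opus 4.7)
The plan is to reduce to an approximation of $\phi$ by the Poisson integral at a well-chosen point $z \in \Delta$. Write $c_k$ for the common value of $f_k$ on $S_\phi$; since $\mu$ is a probability measure supported in $S_\phi$, we have $\phi(f_k) = \int f_k\,d\mu = c_k$, so the problem is to find $z \in \Delta$ with $|f_k(z) - c_k| < \varepsilon$ for every $k$. My first step is to produce a single open-closed neighborhood $U = U(E) \subset X$ of $S_\phi$ on which $|f_k - c_k| < \varepsilon/2$ for all $k = 1,\ldots,N$. Each $f_k$ is continuous on $X$, so the set $V = \bigcap_k \{|f_k - c_k| < \varepsilon/2\}$ is open and contains $S_\phi$; because the basic sets $U(F)$ form a basis for the topology of $X$, around each point of $S_\phi$ I can pick such a basic set inside $V$, and compactness of $S_\phi$ lets me extract a finite subcover whose union is again open-closed of the form $U(E)$ inside $V$. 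Since $S_\phi \neq \emptyset$, $U \neq \emptyset$, and hence $m(E) = \widehat m(U) > 0$.

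The second step is to choose $z$. By the Lebesgue differentiation theorem, $E$ admits a point of density $e^{i\theta_0}$, and for $z = re^{i\theta_0}$ the standard boundary behavior of the Poisson kernel gives $\int_E P_z\,dm \to 1$, equivalently $\int_{\mathbf{T}\setminus E} P_z\,dm \to 0$, as $r \to 1^-$. Under the identification $L^\infty(\mathbf{T}) \cong C(X)$, the pointwise estimate $|f_k - c_k| < \varepsilon/2$ on the open-closed set $U$ is the same as $|f_k - c_k| \le \varepsilon/2$ for $m$-a.e.\ $\theta \in E$, so writing $f_k(z) = \int_{\mathbf{T}} f_k(\theta)\,P_z(\theta)\,dm(\theta)$ and splitting the integral over $E$ and $\mathbf{T}\setminus E$ yields
$$
|f_k(z) - c_k| \;\le\; \frac{\varepsilon}{2} \;+\; 2\,\max_{1\le k\le N}\|f_k\|_\infty \int_{\mathbf{T}\setminus E} P_z(\theta)\,dm(\theta).
$$
Choosing $r$ close enough to $1$ makes the second term smaller than $\varepsilon/2$ for every $k$ simultaneously, whence $z \in W(\phi, f_1,\ldots, f_N, \varepsilon) \cap \Delta$.

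For the Shilov boundary assertion, I would observe that each $x \in X$ has its unique representing measure, when viewed as a homomorphism on $H^\infty(\Delta)$, equal to $\delta_x$, so its minimal support is the single point $\{x\}$, and every function of $H^\infty(\Delta)$ is trivially constant there. Applying the first part of the lemma with arbitrary $f_1,\ldots,f_N$ and arbitrary $\varepsilon > 0$ makes every basic neighborhood of $x$ meet $\Delta$, and since these neighborhoods form a basis at $x$ one concludes $x \in \overline{\Delta}$.

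The main obstacle I anticipate is the topological construction of $U$ together with the clean transfer of the pointwise bound on $X$ to an a.e.\ bound on $E \subset \mathbf{T}$; once this bookkeeping is in place, the Poisson-kernel estimate at a Lebesgue density point of $E$ is essentially immediate.
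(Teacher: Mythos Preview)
Your proof is correct and follows the same overall scheme as the paper: first cover $S_\phi$ by a single open-closed set $U=U(E)$ on which each $f_k$ is within $\varepsilon/2$ of its constant value, then transfer this to an $m$-a.e.\ bound on $E\subset\mathbf{T}$, and finally pick a suitable $z=re^{i\theta}$ with $\theta\in E$. The only difference is in the last step: the paper simply invokes Fatou's theorem to pick some $\theta\in E$ at which all the $f_k$ have radial limits $f_k(\theta)$, and then takes $r$ close to $1$ so that $\lvert f_k(re^{i\theta})-f_k(\theta)\rvert<\varepsilon/2$; you instead pick a Lebesgue density point $e^{i\theta_0}$ of $E$ and split the Poisson integral over $E$ and $\mathbf{T}\setminus E$. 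Your route gives a slightly more explicit quantitative estimate, while the paper's is marginally shorter; both rest on the same boundary behavior of Poisson integrals, so the arguments are essentially equivalent variants of one another. Your derivation of the Shilov-boundary consequence (via $S_x=\{x\}$ for $x\in X$) matches the paper's intended reading.
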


\begin{proof}
Since each $f_k$ is continuous on $X$, $f_k(\phi) = f_k(x)$
for all $x$ in $S_\phi$. Fix an $x$ in $S_\phi$, and
choose an open-closed
neighborhood $U=U(E)$ of $S_\phi$ such that
$$
\vert f_k(\psi) - f_k(x)  \vert < \varepsilon/2\,, \quad \psi \in U,
$$
for $k=1, 2, \cdots, N$.
Since the subset $E$ of $\mathbf{T}$ satisfies that
$m(E) = \widehat{m}(U(E)) > 0$, we observe that
$$
\vert f_k(\theta) - f_k(x) \vert < \epsilon/2\,, \quad m-a.e. \quad
\theta \in E.
$$
Thus it follows from Fatou's theorem that, for some $\theta$ in $E$,
there is a $z=re^{i\theta}$ in $\Delta$ satisfying that
\begin{align*}
\vert f_k(z) - f_k(\phi) \vert
& \le \vert f_k(z) - f_k(\theta) \vert +\vert f_k(\theta) - f_k(x) \vert\\
& < \varepsilon/2 + \varepsilon/2 = \varepsilon\,,
\end{align*}
for $k=1, 2, \cdots, N$, so the proof is complete.
\end{proof}

Let us make a remark on this lemma. Since $H^\infty(\Delta)$ is a
logmodular algebra on $X$, $\mu$ is a Jensen measure , meaning that
$\mu$ satisfies the inequality
$$
\log \vert \phi(f)\vert \;\le\; \int_{S_{\phi}}
 \log \vert f(x) \vert\, d\mu(x) \,,\quad f \in H^\infty(\Delta) ,
$$
so if $f$ vanishes on a Borel subset $K$ with $\mu(K)>0$,
then $f(\phi) = 0$. This provides that if each $f_k$ is constant
$c_k$ on such a $K$, then the conclusion of Lemma \ref{lem2.1} holds.
We notice that, except for analytic discs, there may exist a
function $f$ in $H^\infty(\Delta)$ such that $f$ is not constant
on $S_\phi$ and the right side of the above inequality diverges.

\medskip
For $\eta$ and $\xi$ in $\mathfrak{M}(H^\infty)$, the
\textit{pseudo-hyperbolic distance} $\rho(\eta,\xi)$ between
$\eta$ and $\xi$ is defined to be
$$
\rho(\eta,\xi)\;=\; \sup\left\{\,\vert f(\eta)\vert \,;\;
f \in H^\infty(\Delta), \,f(\xi)=0 \; and \; \Vert f \Vert \,\leq\, 1 \;\right\}.
$$
Then the relation $\rho(\eta,\xi)< 1$ is an equivalence relation in $\mathfrak{M}(H^\infty)$ and
the equivalence class $P(\xi)=\{ \eta \in \mathfrak{M}(H^\infty);\rho(\eta,\xi)< 1\}$ is called the \textit{Gleason part} of $\xi$. A Gleason part $P$ is an \textit{analytic disc} if there exists a continuous
bijective map $L$ of $\Delta$ onto $P$ such that
$f\circ L$ is analytic on $\Delta$ for all $f$ in
$H^\infty(\Delta)$, and such a map $L$ is called an \textit{analytic map}.
Since $H^\infty(\Delta)$ is a logmodular algebra on $X$, it follows
from Wermer's embedding theorem that
each part is either a single point or an analytic disc.

Furthermore, K. Hoffman \cite{H2} characterized analytic discs in $\mathfrak{M}(H^\infty)$ by using interpolating sequences in $\Delta$.
Recall that a sequence $\{z_j\}$ in $\Delta$ is an
\textit{interpolating sequence} if,
for any bounded sequence $\{w_j\}$, there exists
a function $f$ in $H^\infty(\Delta)$ such that
$f(z_j)=w_j$, for $j = 1, 2, \cdots.$
Such a sequence is characterized by the condition
$$
\inf_{k} \prod_{j:j\not= k} \left \vert
\frac{z_k - z_j}{1 - \overline{z_j}z_k} \right \vert
> 0\;.
$$
Especially, an interpolating sequence $\{z_j\}$ is
said to be \textit{thin} (\textit{sparse}), if it satisfies
$$
\lim_{k\to \infty} \prod_{j:j \not= k} \left \vert
\frac{z_k - z_j}{1 - \overline{z_j}z_k} \right \vert
\;=\;1\,.
$$
For an  an interpolating sequence $\{z_j\}$, the associated
Blaschke product
\begin{equation}
\label{eq2.1}
B(z)\;=\; \prod_{j= 1}^\infty \frac{\overline{z_j}}{\vert z_j\vert} \, \frac{z_j - z}{1 - \overline{z_j}z}\;,
\end{equation}
is called the \textit{interpolating Blaschke product\/} (where $\overline{z_j}/\vert z_j\vert=-1,$ if $z_j=0$\,). When $B(z)$
is a Blaschke product, let us agree to also call $e^{i\gamma}B(z)$
a Blaschke product, for a real constant $\gamma$.

\medskip
The set $\mathfrak{M}(H^\infty)^{\Delta}$ of all maps of $\Delta$ into
$\mathfrak{M}(H^\infty)$ is a compact Hausdorff space in the product
topology. Observe that, in this topology, a net $(F_\beta)$ in
$\mathfrak{M}(H^\infty)^{\Delta}$ has limit $F$ if and only if
$f\circ F_\beta(\zeta)\rightarrow f\circ F(\zeta)$ for all $f$
in $H^\infty(\Delta)$ and all $\zeta$ in $\Delta$.
For a sequence $\{c_n\}$ in $\Delta$, we put
\begin{equation}
\label{eq2.2}
L_n(\zeta)\;=\;
\frac{\zeta+c_n}{1+\overline{c_n}\zeta}\,,\quad
\zeta \in \Delta\,.
\end{equation}
Then $L_n$ is an analytic map of $\Delta$ onto the part $\Delta$ in
$\mathfrak{M}(H^\infty)$. From the sequence $\{L_n\}$ in $\mathfrak{M}(H^\infty)^{\Delta}$, we take a convergent subnet $(L_\beta)$
with limit $L$ in $\mathfrak{M}(H^\infty)^{\Delta}$, which is called the
\textit{Hoffman map} determined by $(L_\beta)$.

Let $P(\psi)$ be a Gleason part of $\psi$ in $\mathfrak{M}(H^\infty)$.
Then Hoffman showed that $P(\psi)$ is an analytic disc if and
only if the analytic map for $P(\psi)$ is the Hoffman map
$L_{\psi}=\lim_\beta L_\beta$, where $(L_\beta)$ is
a subnet of $\{L_n\}$ for an interpolating sequence $\{c_n\}$.
We notice that the proof of the ``\,only if\,'' part requires the
corona theorem. Whenever $P$ is an analytic disc in $\mathfrak{M}(H^\infty)\setminus \Delta$, the closure of
$P$ in $\mathfrak{M}(H^\infty)$ never meets the
Shilov boundary $X$, because of the existence of a Blaschke
product vanishing identically on $P$ (see \cite[102p]{H2}).

\medskip
The following result is well-known, and used repeatedly
in what follows (see \cite[Chapter X, Excercise 8]{Ga}).
However, let us provide here an easy proof. It is noteworthy
that the argument does not depend on the corona theorem.
\begin{lem}
\label{lem2.2}
Let $\/ \{c_n\}$ be a thin interpolating sequence
in $\Delta$ with $c_n\to 1$, and let $L$ be the Hoffman
map by a convergent subnet $(L_\beta)$ of the sequence
$\{L_n\}$ by \eqref{eq2.2}. We put $\xi=L(0)$, a homomorphism
in $\mathfrak{M}_1$. Then the Gleason part $P(\xi)$ of $\xi$
is an analytic disc in $\mathfrak{M}_1$, where $L$ is an
analytic homeomorphism of $\Delta$ onto $P(\xi)$.
\end{lem}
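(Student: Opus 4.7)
The plan is to verify in sequence that: (i) $\xi=L(0)$ lies in $\mathfrak{M}_1$; (ii) $f\circ L$ is analytic on $\Delta$ for every $f\in H^\infty(\Delta)$; (iii) $L$ is an injective homeomorphism of $\Delta$ onto its image; and (iv) this image coincides with the entire Gleason part $P(\xi)$. Claim (i) is immediate, since $z(L_\beta(0))=c_\beta\to 1$ forces $z(\xi)=1$. For (ii), $\{f\circ L_n\}$ is uniformly bounded by $\|f\|_\infty$ and hence a normal family on $\Delta$; the subnet $(f\circ L_\beta)$ converges pointwise to $f\circ L$, and any further subnet extracted by Montel that converges locally uniformly must have the same pointwise limit, so $f\circ L$ is a locally uniform limit of analytic functions, hence analytic.

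The heart of the argument is injectivity, which I would establish via the interpolating Blaschke product $B$ with zeros $\{c_n\}$. Using $L_n^{-1}(c_n)=0$, $L_n^{-1}(c_j)=(c_j-c_n)/(1-\overline{c_n}c_j)$, and M\"obius invariance of $\rho$, one computes
\begin{equation*}
|B(L_n(\zeta))|\;=\;|\zeta|\cdot\prod_{j\ne n}\rho\bigl(L_n^{-1}(c_j),\zeta\bigr).
\end{equation*}
Thinness gives $\sum_{j\ne n}(1-|L_n^{-1}(c_j)|^2)\to 0$, and the standard estimate $1-\rho(a,\zeta)^2\le\frac{1+|\zeta|}{1-|\zeta|}(1-|a|^2)$ upgrades this to $\prod_{j\ne n}\rho(L_n^{-1}(c_j),\zeta)\to 1$ locally uniformly in $\zeta$. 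Hence $|B(L(\zeta))|=|\zeta|$. Reapplying the normal-family argument from (ii) to $\{B\circ L_n\}$, $B\circ L$ is an analytic self-map of $\Delta$ with $B\circ L(0)=0$ and $|B\circ L(\zeta)|=|\zeta|$ on $\Delta$, so the Schwarz lemma forces $B\circ L(\zeta)=e^{i\theta}\zeta$ for some real $\theta$. Injectivity of $L$ follows at once, and since $L^{-1}=e^{-i\theta}B$ on $L(\Delta)$ is continuous, $L$ is a homeomorphism of $\Delta$ onto $L(\Delta)$.

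For the remaining inclusion $L(\Delta)\supset P(\xi)$, first observe that $L(\Delta)\subset P(\xi)$ is a Schwarz-type estimate: if $f\in H^\infty(\Delta)$ satisfies $\|f\|\le 1$ and $f(\xi)=0$, then $(f\circ L)(0)=0$, so $|f(L(\zeta))|\le|\zeta|$ and therefore $\rho(L(\zeta),\xi)\le|\zeta|<1$. Nontriviality of $P(\xi)$, together with Wermer's embedding theorem (valid since $H^\infty(\Delta)$ is logmodular on $X$), identifies $P(\xi)$ as an analytic disc carrying some parametrization $\Phi\colon\Delta\to P(\xi)$. Writing $L=\Phi\circ\psi$ and $G=B\circ\Phi$, the identity $B\circ L(\zeta)=e^{i\theta}\zeta$ becomes $G(\psi(\zeta))=e^{i\theta}\zeta$, so $\psi$ is injective and $G$ surjective onto $\Delta$. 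Thinness of $\{c_n\}$ forces distinct accumulation points of $\{c_n\}$ in $\mathfrak{M}(H^\infty)$ into distinct Gleason parts (pseudo-hyperbolic distances between such points are $1$), so $B$ vanishes on $P(\xi)$ only at $\xi$; consequently $G$ has exactly one zero in $\Delta$, must be a M\"obius automorphism, and $\psi=G^{-1}\circ(e^{i\theta}\mathrm{id})$ is then also M\"obius, whence $L$ carries $\Delta$ bijectively onto $P(\xi)$.

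The main obstacle I anticipate is the surjectivity step (iv). The chain of inclusions and the homeomorphism onto the image are controlled by the Schwarz lemma and the clean formula $B\circ L=e^{i\theta}\mathrm{id}$, but promoting $L(\Delta)\subset P(\xi)$ to equality hinges on two nontrivial inputs: Wermer's embedding theorem for logmodular algebras, and the thinness-driven claim that $B$ has a single zero inside $P(\xi)$, which itself requires tracking how thin interpolating sequences cluster across Gleason parts in $\mathfrak{M}(H^\infty)$.
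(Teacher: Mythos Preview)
Your steps (i)--(iii) are correct and match the paper's argument closely: both establish $B\circ L(\zeta)=e^{i\theta}\zeta$ via the thinness hypothesis and Schwarz's lemma (the paper computes $|(B\circ L)'(0)|=1$ directly, you use the product formula, but these are equivalent). The inclusion $L(\Delta)\subset P(\xi)$ and the appeal to Wermer's embedding theorem are also shared.

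The gap is in your surjectivity argument. The assertion ``$G$ has exactly one zero in $\Delta$, must be a M\"obius automorphism'' is not justified. An analytic self-map of $\Delta$ with a single zero need not be M\"obius: take $G(z)=zS(z)$ with $S$ a singular inner function, or simply $G(z)=z/2$. You have also established surjectivity of $G$ (from $G\circ\psi=e^{i\theta}\mathrm{id}$), but even ``surjective with one zero'' does not obviously force $G$ to be M\"obius; you would need $|G'(0)|=1$, and the natural route to that is through $|\psi'(0)|\le 1$, which requires $\psi=\Phi^{-1}\circ L$ to be \emph{analytic}. You never establish this, and it is not automatic: $\Phi^{-1}$ is a coordinate on the abstract part $P(\xi)$, not a priori an $H^\infty$ function.

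The paper closes exactly this gap. After obtaining $B\circ L=\mathrm{id}$ and invoking Wermer's map $\tau$, it sets $f=\tau^{-1}\circ L$ and $g=B\circ\tau$, observes $g\circ f=\mathrm{id}$, and then uses the logmodular property of $H^\infty(\Delta)$ on $X$ to approximate $\tau^{-1}$ pointwise by a bounded sequence in $H^\infty(\Delta)$; this makes $f$ a normal limit of analytic functions, hence analytic. Schwarz then gives $|f'(0)|\le 1$ and $|g'(0)|\le 1$, while $g'(0)f'(0)=1$ forces $|f'(0)|=1$, so $f$ is a rotation and $L=\tau\circ f$ is onto $P(\xi)$. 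Your argument can be repaired along the same lines, but the zero-counting shortcut as written does not work.
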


\begin{proof}
Since $\mathfrak{M}_1$ is a peak set, $P(\xi)$ is contained
in $\mathfrak{M}_1$. It follows from \cite[Chapter X, Lemma 1.1]{Ga}
that $L$ is an analytic map of $\Delta$ to $P(\xi)$. Now we show
that $L$ maps $\Delta$ {\it onto} $P(\xi)$. Let $B$ be the Blaschke
product with zeros $\{c_n\}$. Observe that $B\circ L_\beta(\zeta)$
converges uniformly to $B\circ L(\zeta)$ on compact subsets of
$\Delta$. Since
$B\circ L(0)=0$, and since
\begin{equation*}
\vert(B\circ L)'(0)\vert \;=\; \lim_\beta\,
\vert(B\circ L_\beta)'(0)\vert
 \;=\; \lim_\beta \,(1-\vert c_\beta\vert^2)\,
 \vert B'(c_\beta)\vert \,=\,1,
\end{equation*}
Schwarz's lemma shows that $(B\circ L)(z) = z$. On the other
hand, Wermer's embedding theorem assures the existence of
an analytic map $\tau$ of $\Delta$ onto $P(\xi)$
with $\tau(0)=\xi$. Let $f=\tau^{-1}\circ L$ and
$g=B\circ \tau$. Then these functions map $\Delta$
into itself, and vanish at $0$ in $\Delta$. Since
$H^\infty(\Delta)$ is a logmodular algebra on $X$,
$\tau^{-1}$ is approximated by a bounded sequence in
$H^\infty(\Delta)$ (compare to (a) of Section 6). Observe that
$g(f(z)) = (B\circ L)(z) = z$. Since $\vert g'(f(z))f'(z)\vert = 1$,
$\vert f'(0)\vert = 1$ by Schwarz's lemma. This shows that $L$
maps $\Delta$ onto $P(\xi)$. Since $B=L^{-1}$ is continuous on
$P(\xi)$, $L$ is a desired homeomorphism.
\end{proof}

In our argument it would be useful to understand the following
observation: Let $f_1, f_2, \cdots, f_N$ be in $H^\infty(\Delta)$,
and let $\{c_n\}$ be a sequence in $\Delta$.
By taking a suitable subsequence $\{c_{n_i}\}$ of $\{c_n\}$, it
follows from normal families argument that $f_k\circ L_{n_i}$
converges uniformly to $F_k$ on compact subsets of $\Delta$,
for $k=1, 2, \cdots, N$. We also assume that $\{c_{n_i}\}$ is
a thin interpolating sequence. If $L$ is the Hoffman map by a
convergent subnet of $\{L_{n_i}\}$, then $L$ is a homeomorphism with
$F_k = f_k\circ L$. Moreover, $f_k$ extends to the closure of
Gleason part of $L^{-1}(0)$ in $\mathfrak{M}(H^\infty)$.

\begin{lem}
\label{lem2.3}
Let $\phi$ be a homomorphism in $\mathfrak{M}_1$.
Then $\phi$ lies in the closure of \,$\Delta$ in
$\mathfrak{M}(H^\infty)$ if and only if, for every
at most countable family $\mathfrak{F}$ in $H^\infty(\Delta)$,
there is a thin interpolating sequence $\{\zeta_j\}$ such that
\eqref{eq1.2}\, holds for each $f$ in $B(\mathfrak{F})$,
the uniformly closed subalgebra generated by $\mathfrak{F}$.
Moreover, there is a $\xi$ in $\mathfrak{M}_1$
whose Gleason part $P(\xi)$ is
homeomorphic to $\Delta$ such that $f(\phi)\,=\,f(\xi)$
for each $f$ in $B(\mathfrak{F})$.
\end{lem}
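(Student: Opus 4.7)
The plan splits into the two implications and the ``moreover'' clause. For $(\Leftarrow)$, any basic neighborhood $W(\phi,f_1,\dots,f_N,\varepsilon)$ of $\phi$ is handled by applying the hypothesis to $\mathfrak{F}=\{f_1,\dots,f_N\}$: the resulting sequence $\{\zeta_j\}$ satisfies $f_k(\zeta_j)\to f_k(\phi)$, hence lies eventually in $W$, so $W\cap\Delta\neq\emptyset$ and $\phi\in\overline{\Delta}$.

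For $(\Rightarrow)$, I would enumerate $\mathfrak{F}=\{g_1,g_2,\dots\}$. Since $\phi\in\mathfrak{M}_1\cap\overline{\Delta}$ and $z(\phi)=1$, each basic neighborhood $W(\phi,z,g_1,\dots,g_j,2^{-j})$ meets $\Delta$ and, being open, contains points with $1-|\zeta|$ arbitrarily small. The construction of $\{\zeta_j\}$ proceeds inductively: at step $j$, choose $\zeta_j$ in this neighborhood with $1-|\zeta_j|$ so small that, in addition, $1-\rho(\zeta_j,\zeta_i)<2^{-i-j}$ for every $i<j$. This auxiliary requirement is achievable because the pseudohyperbolic identity $1-\rho(\eta,\xi)^2=(1-|\eta|^2)(1-|\xi|^2)/|1-\overline{\xi}\eta|^2$ drives the left-hand side to $0$ as $|\zeta_j|\to 1^-$ with $\zeta_1,\dots,\zeta_{j-1}$ held fixed. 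The sequence so built satisfies $\zeta_j\to 1$ and $g_k(\zeta_j)\to g_k(\phi)$ for each $k$; the latter extends to every $f\in B(\mathfrak{F})$ because $\{f\in H^\infty(\Delta):f(\zeta_j)\to f(\phi)\}$ is a uniformly closed subalgebra containing $\mathfrak{F}$. Finally, by the symmetry of the bound $2^{-i-j}$, which at step $\max(i,j)$ handles both orderings of the pair, one obtains $\sum_{i\neq j}(1-\rho(\zeta_j,\zeta_i))=O(2^{-j})\to 0$, so $\prod_{i\neq j}\rho(\zeta_j,\zeta_i)\to 1$ and $\{\zeta_j\}$ is thin interpolating.

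For the ``moreover'' clause, apply Lemma~\ref{lem2.2} to this thin interpolating sequence $\{\zeta_j\}$: a convergent subnet $(L_{\zeta_\beta})$ of $\{L_{\zeta_j}\}$ produces a Hoffman map $L$, and $\xi:=L(0)\in\mathfrak{M}_1$ has Gleason part $P(\xi)$ homeomorphic to $\Delta$. Then $f(\xi)=\lim_\beta f(L_{\zeta_\beta}(0))=\lim_\beta f(\zeta_\beta)=f(\phi)$ for each $f\in B(\mathfrak{F})$, the last equality holding because the full sequence satisfies $f(\zeta_j)\to f(\phi)$ by construction. The main obstacle is the bookkeeping needed to secure thinness alongside the countably many approximation conditions; the device of imposing the symmetric separation $1-\rho(\zeta_j,\zeta_i)<2^{-i-j}$ at the later index of the pair $(i,j)$ resolves this by controlling head and tail of the pseudohyperbolic sums uniformly in $j$.
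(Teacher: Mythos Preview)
Your proof is correct and follows essentially the same route as the paper: pick $\zeta_j$ in successively smaller basic neighborhoods of $\phi$ to secure $\zeta_j\to 1$ and $g_k(\zeta_j)\to g_k(\phi)$, then pass to $B(\mathfrak{F})$ by density/closure. You are in fact more explicit than the paper on two points it leaves implicit---the paper simply says ``taking a subsequence if necessary'' where you build in the pseudohyperbolic separation $1-\rho(\zeta_j,\zeta_i)<2^{-i-j}$ to force thinness directly, and the paper does not spell out the ``moreover'' clause at all, whereas you derive it cleanly from Lemma~\ref{lem2.2}.
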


\begin{proof}
Suppose that $\phi$ lies in the closure of $\Delta$ in
$\mathfrak{M}(H^\infty)$. Put $\,\mathfrak{F}=\{f_1, f_2, \cdots \}$,
and let $\mathfrak{F}_0$ be the family of finite sums of functions
of the form $ r\cdot f_1^{n_1} f_2^{n_2} \cdots f_k^{n_k}$ with a
rational number $r$ and with nonnegative integers $n_1, n_2, \cdots, n_k$.
Observe that $\mathfrak{F}_0$ is also a countable family. Replacing
$\mathfrak{F}$ with $\mathfrak{F}_0$, we may assume $B(\mathfrak{F})$
is the uniform closure of $\mathfrak{F}$ in $H^\infty(\Delta)$.
Let $\{\varepsilon_j\}$ be a decreasing sequence of positive numbers
with $\varepsilon_j\to 0$. Since $\mathfrak{M}_1$ is a peak set in
$\mathfrak{M}(H^\infty)$, it then follows from our assumption that
$$
W(\phi, f_1, \cdots, f_j, \varepsilon_j)\;
\cap \;\left\{ z ; \vert z - 1 \vert < \varepsilon_j \right\}\; \neq \emptyset \,,
$$
where $W(\phi, f_1, \cdots, f_j, \varepsilon_j)$ is
defined as in \eqref{eq1.3}. We then fix a $\zeta_j$ in
this set. Taking a subsequence if necessary, we choose a
sequence $\{\zeta_j\}$ for which \eqref{eq1.2} holds on
$\mathfrak{F}$. Since $\mathfrak{F}$ is uniformly dense
in $B(\mathfrak{F})$, the same conclusion holds on
$B(\mathfrak{F})$. The converse is obvious,
so the proof is finished.
\end{proof}

\medskip
The point of our argument on the corona problem is to find such
an analytic disc $P(\xi)$ for a given $\phi$ in $\mathfrak{M}_1$
and for $f_1, f_2, \cdots, f_N$ in $H^\infty(\Delta)$.
Together with the corona theorem, Lemma \ref{lem2.3} shows
that the union of all homeomorphic analytic discs is dense in
$\mathfrak{M}(H^\infty)$.

\bigskip
\bigskip
\section{Approximation to representing measures}
\label{S3}

In this section we prepare three lemmas, which play an important role
in our argument. Recall that the restriction $A_1$ of
$H^\infty(\Delta)$ to $\mathfrak{M}_1$ is a uniform algebra,
whose Shilov boundary is $X\cap \mathfrak{M}_1$,
and also that an open-closed set $U=U(E)$ in $X$ satisfies that
$m(E)=\widehat{m}(U)> 0$ whenever $U\neq\emptyset$.
Here $\widehat{m}$ is the lifting of Lebesgue measure
$m$ to $X$.
Let $\phi$ be a homomorphism in $\mathfrak{M}_1$, and let
$S_\phi$ be the minimal support of the representing measure $\mu$
for $\phi$. Then $S_\phi$ is a compact subset of $X\cap \mathfrak{M}_1$.
For a simple function $s(\theta)$, the subset
$E=\{\theta\,; s(\theta)\neq 0 \}$ is called the
\textit{support} (\textit{carrier}) of $s(\theta)$.

\begin{lem}
\label{lem3.1}
Let $f_1, f_2, \cdots, f_N $ be in $H^\infty(\Delta)$, and let
$\varepsilon > 0$. Then there is a simple function
$s(\theta)\,=\, \sum_{i=1}^{\ell} a_i \chi_{E_i}(\theta)$ on
$\mathbf{T}$ with $a_i > 0$ such that the support
$E=\cup_{i=1}^\ell E_i$ is contained in
$(-\varepsilon, \varepsilon)$,
$\int_{E} s(\theta) dm(\theta)\,=\, 1$ and
\begin{equation}
\left \vert \phi(f_k) - \int_E f_k(\theta)s(\theta) dm(\theta) \right \vert
\;<\; \varepsilon, \quad k=1, 2, \cdots, N.
\label{eq3.1}
\end{equation}
Consequently, if $\{\varepsilon_j\}$ is a decreasing
sequence of positive numbers with $\varepsilon_j \to 0$, then
there is a sequence $\{s_j(\theta)\}$ of nonnegative simple
functions with supports $E^{(j)}$ such that
each $s_j(\theta)dm(\theta)$ is a probability measure
on $(-\varepsilon_j, \varepsilon_j)$ and
\begin{equation*}
\phi(f_k) \;=\;
\lim_{j \to \infty}\int_{E^{(j)}}\,
f_k(\theta)s_j(\theta)\,dm(\theta).
\end{equation*}
\end{lem}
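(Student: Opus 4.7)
The plan is to use the representation $\phi(f_k) = \int_{S_\phi} f_k\,d\mu$ and approximate it by integrals against $dm$ through a carefully chosen partition of $S_\phi$ into basic open-closed sets. First I localize $S_\phi$: setting $E_0 = \{\theta \in \mathbf{T} : |\theta| < \varepsilon\}$, I claim $S_\phi \subseteq U(E_0)$. If some $x \in X \cap \mathfrak{M}_1$ lay outside $U(E_0)$, then $x \in U(\mathbf{T}\setminus E_0)$, on which the lifted coordinate function satisfies $|z - 1| \ge 2\sin(\varepsilon/2) > 0$, contradicting $z(x) = 1$.

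Next I construct a partition of $S_\phi$ adapted to $f_1,\dots,f_N$. Using continuity of the $f_k$ on $X$, compactness of $S_\phi$, and the fact that basic open-closed sets $U(F)$ with $F\subseteq E_0$ form a neighborhood basis of $S_\phi$, I cover $S_\phi$ by finitely many such sets on each of which every $f_k$ oscillates by less than $\eta > 0$ (to be chosen). Disjointifying (using that $\{U(E)\}$ is closed under finite Boolean operations) and discarding pieces disjoint from $S_\phi$ yields disjoint $V_i = U(E_i)$, $i = 1,\dots,\ell$, with $E_i \subseteq E_0$ and $V_i \cap S_\phi \ne \emptyset$. Picking $x_i \in V_i \cap S_\phi$ and setting $c_{k,i} = f_k(x_i)$ gives $|f_k - c_{k,i}| < \eta$ throughout $V_i$.

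The coefficients are then $a_i = \mu(V_i)/m(E_i)$. These are strictly positive because $\mu(V_i) > 0$ by minimality of $S_\phi$ and $m(E_i) = \widehat{m}(U(E_i)) > 0$ since $U(E_i) \ne \emptyset$. As $\bigcup V_i \supseteq S_\phi$, summing gives $\sum_i \mu(V_i) = 1$, whence $\int_E s\,dm = \sum_i a_i m(E_i) = 1$ with support inside $E_0 = (-\varepsilon,\varepsilon)$. For the estimate \eqref{eq3.1}, using $a_i \widehat{m}(U(E_i)) = \mu(V_i)$ and the identification $\int_{E_i} g\,dm = \int_{U(E_i)} g\,d\widehat{m}$ for $g \in L^\infty(\mathbf{T})$,
\begin{equation*}
\phi(f_k) - \int_E f_k s\,dm \;=\; \sum_{i=1}^\ell \left\{\int_{V_i}(f_k - c_{k,i})\,d\mu - a_i\int_{U(E_i)}(f_k - c_{k,i})\,d\widehat{m}\right\};
\end{equation*}
each summand has modulus at most $2\eta\,\mu(V_i)$, so the whole sum is at most $2\eta$, and taking $\eta < \varepsilon/2$ yields \eqref{eq3.1}. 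The sequential conclusion follows by applying this construction with $\varepsilon = \varepsilon_j$ for each $j$.

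The only genuine subtlety is the containment $S_\phi \subseteq U(E_0)$, a short continuity argument using the lift of the coordinate function; once that is secured, the rest is measure-theoretic bookkeeping built around the identity $a_i m(E_i) = \mu(V_i)$ that makes the comparison telescope correctly.
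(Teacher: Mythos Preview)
Your argument is correct and follows essentially the same route as the paper: cover $S_\phi$ by finitely many disjoint basic open--closed sets on which each $f_k$ has small oscillation, set $a_i=\mu(U(E_i))/m(E_i)$, and compare both integrals to the Riemann-type sum $\sum_i f_k(x_i)\mu(U(E_i))$ via the triangle inequality. The only notable difference is that you spell out, via the coordinate function, why $S_\phi\subset U((-\varepsilon,\varepsilon))$ and hence why the $E_i$ may be taken inside $(-\varepsilon,\varepsilon)$; the paper simply asserts this step.
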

\begin{proof}
Since $f_1, f_2, \cdots, f_N $ are continuous on $X$, we may
choose disjoint open-closed subsets $U_i = U(E_i)$ of $X$, $i=1,2,\cdots, \ell,$  such that $\cup_{i=1}^\ell U_i$ contains
$S_\phi$, $U_i\cap S_\phi \neq\emptyset,$ and
$$
\left \vert f_k(x) - f_k(y) \right \vert \;<\; \frac{\varepsilon}{2},\quad x,y \in
U_i\,,
$$
for $k=1,2,\cdots,N$. Since $S_\phi$ is the minimal
support, we observe that $\mu(U_i\cap S_\phi) > 0$.
We may assume the corresponding sets $E_i$ of $\mathbf{T}$ are
disjoint subsets of $(-\varepsilon, \varepsilon)$.
If we fix an $x_i$ in $U_i\cap S_\phi$, then

\begin{align*}
\left\vert \int_{S_\phi} f_k(x)\, d\mu(x) -\sum_{i=1}^{\ell}f_k(x_i)
\, \mu(U_i\cap S_\phi) \right \vert
&= \left \vert \sum_{i=1}^{\ell}\left( \int_{U_i\cap S_\phi} f_k(x)\, d\mu(x) - \int_{U_i\cap S_\phi} f_k(x_i)\, d\mu(x)\right) \right\vert \\
&\le \sum_{i=1}^{\ell} \int_{U_i\cap S_\phi}
\vert f_k(x) -f_k(x_i)\vert\, d\mu(x)\\
&< \frac{\varepsilon}{2}\,\sum_{i=1}^{\ell}\mu(U_i\cap S_\phi) \;=\; \frac{\varepsilon}{2}.
\end{align*}
Let
$$
a_i\,=\, \frac{\mu(U_i\cap S_\phi)}{m(E_i)} \qquad \text{and} \qquad s(\theta)\,=\, \sum_{i=1}^{\ell} a_i \chi_{E_i}(\theta)\,.
$$
Since $\vert f_k(\theta) - f_k(x_i)\vert < \varepsilon/2$ \,for $m$-a.e.
$\theta$ \,in $E_i$,
we obtain
$$
\left\vert \int_E f_k(\theta)s(\theta) dm(\theta) - \sum_{i=1}^{\ell}f_k(x_i)
a_i m(E_i) \right \vert \;<\; \frac{\varepsilon}{2}\,.
$$
Thus the simple function $s(\theta)$ on $\mathbf{T}$
satisfies the desired inequality \eqref{eq3.1}.
\end{proof}
We note that the above simple function $s(\theta)$ may
have the form $c \chi_{E}(\theta)$ with $c > 0$.
Indeed, if we choose each $U_i=U(E_i)$ satisfying that
$$
\frac{\mu(U_i\cap S_\phi)}{\mu(U_m\cap S_\phi)}\;=\;
\frac{m(E_i)}{m(E_m)} \quad \text{for}\quad i, m = 1, 2,\cdots, \ell,
$$
by cutting down suitably a part of $U_i\setminus S_\phi$, then
$s(\theta)$ has the form $s(\theta)=c \chi_{E}(\theta)$ with
$c = \mu(U_i\cap S_\phi)/m(E_i) > 0$.

\medskip
Let us now turn to a minor extension of Lemma \ref{lem3.1}.
\begin{lem}
\label{lem3.2}
Under the notation of Lemma \ref{lem3.1}, let \,$\mathfrak{F}^\sharp
= \{f_1, f_2, \cdots, f_N\,\}\cup \{1, z, B \,\}$ with
a fixed Blaschke product $B$ and the coordinate function $z$.
Then we have:

\smallskip
\noindent
{\rm (a)} \, There is a nonnegative simple function $s^\sharp(\theta)$
supported on a subset $F$ of $E$ such that $\int_F s^\sharp(\theta) dm(\theta)=1$,
and
\begin{equation}
\left \vert \phi(g) - \int_F g(\theta)s^\sharp(\theta) dm(\theta) \right \vert
\;<\; \varepsilon, \quad g \in \mathfrak{F}^\sharp.
\label{eq3.2}
\end{equation}

\noindent
{\rm (b)} \, Let $B(\mathfrak{F}^\sharp)$ be the uniformly closed subalgebra
of $H^\infty(\Delta)$ generated by $\mathfrak{F}^\sharp$.
Then there is a sequence $\{s^\sharp_j(\theta)\}$ of nonnegative simple
functions with supports $F^{(j)}$ such that
\begin{equation}
\label{eq3.3}
\phi(g) \;=\; \lim_{j \to \infty}\int_{F^{(j)}}\,
g(\theta)s^\sharp_j(\theta)dm(\theta),
\quad g \in B(\mathfrak{F}^\sharp).
\end{equation}
\end{lem}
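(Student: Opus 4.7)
Proof proposal for Lemma 3.2.

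For part (a) the plan is to refine the partition constructed in the proof of Lemma \ref{lem3.1}. Start from the open-closed sets $U_i=U(E_i)$ of $X$ produced there, so that $S_\phi\subset\bigcup_{i=1}^{\ell}U_i$, $U_i\cap S_\phi\neq\emptyset$, $E=\bigcup E_i\subset(-\varepsilon,\varepsilon)$, and each $f_k$ oscillates by less than $\varepsilon/2$ on every $U_i$. Because $z$ and $B$ are continuous functions on $X$, one can refine each $U_i$ into finitely many pairwise disjoint open-closed subsets $V_{i,j}=U(F_{i,j})\subset U_i$ with $V_{i,j}\cap S_\phi\neq\emptyset$, and with the property that every $g\in\mathfrak{F}^\sharp$ (that is, $f_1,\dots,f_N,1,z,B$) oscillates by less than $\varepsilon/2$ on each $V_{i,j}$. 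Automatically $F_{i,j}\subset E_i\subset E$. Now set
\[
a_{i,j}\;=\;\frac{\mu(V_{i,j}\cap S_\phi)}{m(F_{i,j})}>0,\qquad
s^\sharp(\theta)\;=\;\sum_{i,j}a_{i,j}\,\chi_{F_{i,j}}(\theta),
\]
and let $F=\bigcup_{i,j}F_{i,j}\subset E$. Then $\int_F s^\sharp\,dm=\sum_{i,j}\mu(V_{i,j}\cap S_\phi)=\mu(S_\phi)=1$, so $s^\sharp dm$ is a probability measure on $F$, and the same two-step estimate as in Lemma \ref{lem3.1} (approximate $\phi(g)=\int g\,d\mu$ by a Riemann-type sum $\sum g(x_{i,j})\mu(V_{i,j}\cap S_\phi)$ for $x_{i,j}\in V_{i,j}\cap S_\phi$, then compare this sum with $\int_F g\,s^\sharp\,dm$) yields \eqref{eq3.2} for every $g\in\mathfrak{F}^\sharp$.

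For part (b), I would use part (a) in a diagonal argument combined with a density argument. Let $\mathfrak{F}^\sharp_0$ be the (countable) collection of all polynomials with complex rational coefficients in the elements of $\mathfrak{F}^\sharp$; then $\mathfrak{F}^\sharp_0$ is uniformly dense in $B(\mathfrak{F}^\sharp)$. Enumerate $\mathfrak{F}^\sharp_0=\{h_1,h_2,\dots\}$ and choose a decreasing sequence $\varepsilon_j\downarrow 0$. For each $j$, apply part (a) to the enlarged finite family $\{f_1,\dots,f_N\}\cup\{1,z,B\}\cup\{h_1,\dots,h_j\}$ with tolerance $\varepsilon_j$, obtaining a nonnegative simple function $s^\sharp_j(\theta)$ supported on some $F^{(j)}\subset(-\varepsilon_j,\varepsilon_j)$ with $\int_{F^{(j)}}s^\sharp_j\,dm=1$ and
\[
\Bigl|\phi(h_k)-\int_{F^{(j)}}h_k(\theta)\,s^\sharp_j(\theta)\,dm(\theta)\Bigr|<\varepsilon_j,\qquad k=1,2,\dots,j.
\]
In particular the integrals converge to $\phi(h_k)$ for every fixed $h_k\in\mathfrak{F}^\sharp_0$.

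Finally, since each $s^\sharp_j\,dm$ is a probability measure, the linear functional $\Lambda_j(g):=\int_{F^{(j)}}g\,s^\sharp_j\,dm$ satisfies $|\Lambda_j(g)|\le\|g\|_\infty$, and the multiplicative functional $\phi$ satisfies $|\phi(g)|\le\|g\|_\infty$ on $H^\infty(\Delta)$. Given any $g\in B(\mathfrak{F}^\sharp)$ and $\eta>0$, choose $h\in\mathfrak{F}^\sharp_0$ with $\|g-h\|_\infty<\eta/3$; then $|\phi(g)-\Lambda_j(g)|\le|\phi(g-h)|+|\phi(h)-\Lambda_j(h)|+|\Lambda_j(h-g)|<\eta$ for all sufficiently large $j$. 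This gives \eqref{eq3.3} for every $g\in B(\mathfrak{F}^\sharp)$.

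The main technical point, and the only place something beyond Lemma \ref{lem3.1} is needed, is the refinement step in part (a): one must ensure that the refined sets $V_{i,j}$ still meet $S_\phi$ (so that the weights $a_{i,j}$ are positive and sum correctly to $1$), and that the corresponding Lebesgue sets $F_{i,j}$ stay inside the original $E_i$. Both are consequences of the standard properties of the basis $\{U(E)\}$ on the totally disconnected space $X$, namely that $U(E)\neq\emptyset$ forces $m(E)>0$, and that one may carve an open-closed neighborhood in $X$ of any chosen point of $S_\phi$ inside any prescribed open-closed set containing it. After that, part (b) is the usual ``uniformly bounded functionals agree on a dense set'' argument.
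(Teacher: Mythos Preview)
Your proof is correct. Part~(a) is essentially identical to the paper's argument: both refine the open-closed cover $\{U(E_i)\}$ from Lemma~\ref{lem3.1} into smaller pieces on which the additional functions $z$ and $B$ also have small oscillation, then rebuild the simple function with weights $\mu(V_{i,j}\cap S_\phi)/m(F_{i,j})$.

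For part~(b) you take a genuinely different route from the paper. The paper does \emph{not} enlarge the family at each stage; instead it applies~(a) with $\varepsilon_j\to 0$ to the fixed family $\mathfrak{F}^\sharp$ only, and then observes that control of oscillation on the generators automatically propagates to monomials: if each $g\in\mathfrak{F}^\sharp$ has oscillation $<\varepsilon/2$ on every $U(F_i)$, then any monomial $h=f_1^{m_1}\cdots f_N^{m_N}z^mB^n$ has oscillation $<C\varepsilon/2$ there, with $C$ depending on the exponents but not on $\varepsilon$ or the partition. This gives $|\phi(h)-\int h\,s^\sharp_j\,dm|<C\varepsilon_j$ for the \emph{same} sequence $\{s^\sharp_j\}$, and one then passes to linear combinations and uniform limits. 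Your diagonal approach---adjoining $h_1,\dots,h_j$ to the family before invoking the construction of~(a)/Lemma~\ref{lem3.1}, and finishing with the uniform-boundedness $3\varepsilon$ argument---is cleaner and more robust (it uses nothing about the algebraic structure of $B(\mathfrak{F}^\sharp)$), while the paper's argument has the mild advantage that the sequence $\{s^\sharp_j\}$ is built once and for all from $\mathfrak{F}^\sharp$ alone and in fact can be taken with decreasing supports $F^{(j)}$.
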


\begin{proof}\,
{\rm (a)}\, Let $s(\theta)\,=\, \sum_{i=1}^{\ell} a_i \chi_{E_i}(\theta)$
be the simple function in Lemma \ref{lem3.1}. Since each $g$ in
$\{1, z, B \,\}$ is continuous on each $U(E_i)$,
there are disjoint subsets $F^{(i)}_j$ of $E_i$,
$j=1, 2, \cdots, m^{(i)}$ , such that
$S_\phi \cap U(F^{(i)}_j)\neq\emptyset$,
$$
\left\vert g(x) - g(y) \right\vert\; <\;\frac{\varepsilon}{2},
\quad x,y \in U(F^{(i)}_j),
$$
and the family $\{U(F^{(i)}_j) ; i=1,2,\cdots, \ell,\; j=1, 2,
\cdots, m^{(i)} \, \}$ forms a finite covering of $S_\phi$.
We then write $\{U(F_j)\,; \, k=1,2,\cdots, m \}$ for this family
$\{U(F_j^{(i)})\}$,
and put $b_j\,=\, \mu(U(F_j)\cap S_\phi)/m(F_j)$. By the same way
as in the proof of Lemma \ref{lem3.1}, we see that the simple function
$s^\sharp(\theta) \,=\, \sum_{j=1}^{m} b_j \chi_{F_j}(\theta)$ satisfies \eqref{eq3.2}
and the support $F=\cup_{j=1}^m F_j$ of $s^\sharp(\theta)$ is a
subset of $E=\cup_{i=1}^{\ell} E_i$.

\medskip
\noindent
{\rm(b)}\, Since, for $f, g$ in $\mathfrak{F}^\sharp$,
\begin{align*}
\left \vert (fg)(x) - (fg)(y) \right \vert
\;&\le\;\left \vert (fg)(x) - f(x)g(y) \right \vert +
\left \vert f(x)g(y) - (fg)(y) \right \vert \\
\;&<\; (\Vert f \Vert_\infty+\Vert g \Vert_\infty )\;\frac{\varepsilon}{2},
\quad x,y \in U(F_i),
\end{align*}
we observe that
\begin{equation*}
\left \vert \phi(fg) -
\int_F (fg)(\theta)s^\sharp(\theta) dm(\theta) \right \vert
\;<\; (\Vert f \Vert_\infty+\Vert g \Vert_\infty )\;\varepsilon.
\end{equation*}
Let $h$ be a finite product of functions in $\mathfrak{F}^\sharp$,
that is, $h = f_1^{m_1} f_2^{m_2} \cdots f_N^{m_N}\, z^m \, B^n$,
where $m_1, m_2, \cdots, m_N, m$ and $n$ are nonnegative integers.
Extending the above argument, we may choose a constant $C$, depending
on $f_k, B, m_k, m$ and $n$, such that $\vert h(x)-h(y)\vert\,<\,C\varepsilon/2$
for $x,y$ in $U(F_i)$ and
\begin{equation*}
\left \vert \phi(h) - \int_F h(\theta)s^\sharp(\theta) dm(\theta) \right \vert
\;<\; C \varepsilon.
\end{equation*}
Notice that $C$ never depends on $\varepsilon, F_i,$ and $s^\sharp(\theta)$.
Let $\{\varepsilon_j\}$ be a decreasing sequence of positive numbers
with $\varepsilon_j \to 0$.
It follows from {\rm(a)} and the argument above
that there is a sequence $\{s^\sharp_j(\theta)\}$ of simple functions
with decreasing supports $F^{(j)}$ such that
\begin{equation*}
\left \vert \phi(h) - \int_{F^{(j)}} \,
h(\theta)s^\sharp_j(\theta) dm(\theta) \right \vert
\;<\; C\varepsilon_j,
\end{equation*}
for the above finite product $h$ of functions in $\mathfrak{F}^\sharp$,
so the equality \eqref{eq3.3} holds for the function $h$.
Let $\mathfrak{F}_0^\sharp$ be the space of all finite linear
combinations of such product functions
$h = f_1^{m_1} f_2^{m_2} \cdots f_N^{m_N}\, z^m \, B^n$. Since $\phi$ and
the integrals by $s^\sharp_j(\theta) dm(\theta)$ are linear, \eqref{eq3.3}
holds for all functions in $\mathfrak{F}_0^\sharp$.
Observe that $\mathfrak{F}_0^\sharp$ is an algebra, in other words,
it is closed under the formation of multiplications.
Since $B(\mathfrak{F}^\sharp)$ is the uniform closure of
$\mathfrak{F}_0^\sharp$, the equation \eqref{eq3.3} extends to
$B(\mathfrak{F}^\sharp)$, which completes the proof.
\end{proof}

Since $\phi(fg)=\phi(f)\phi(g)$ for $f,g$ in $B(\mathfrak{F}^\sharp)$,
\eqref{eq3.3} shows that
\begin{equation*}
\left \vert  \int_{F^{(j)}} \,
f(\theta)s^\sharp_j(\theta) dm(\theta)\cdot \int_{F^{(j)}} \,
g(\theta)s^\sharp_j(\theta) dm(\theta) -
\int_{F^{(j)}} \,
(fg)(\theta)s^\sharp_j(\theta)\,dm(\theta)
 \right \vert
\end{equation*}
tends to $0$, as $j \to \infty$. Since the restriction $\phi \vert_B$ of
$\phi$ to $B(\mathfrak{F}^\sharp)$ is a homomorphism on the uniform
algebra, the property (b) of Lemma \ref{lem3.2} shows that, in a sense, $s^\sharp_j(\theta)\,dm(\theta)$ is close to a representing
measure for $\phi\vert_B$ on $B(\mathfrak{F}^\sharp)$.

\medskip
Let $s(\theta)\,=\, \sum_{i=1}^{\ell} a_i
\chi_{E_i}(\theta)$ be the simple function obtained
in Lemma \ref{lem3.1}. Since each term $a_i\chi_{E_i}(\theta)$ may be replaced by
$(a_i m(E_i)/m(F_i)) \chi_{F_i}(\theta)$ for a
subset $F_i$ of $E_i$ with $m(F_i)>0$, there are
many ways to represent such simple functions. Let
us always assume that $\mu(S_\phi\cap U(E_i)) >0$
and each $E_i$ is contained in either
$[0,\varepsilon)$ or $(-\varepsilon, 0]$. Hence, if $E_i$ is a subset of
$[0,\varepsilon)$, then $m(E_i\cap [0,\delta))>0$
for all $\delta>0$.
In order to discuss the relation between sequences
of such simple functions and Poisson kernels,
we need to choose certain analytic discs in $\mathfrak{M}_1$.
Since $s(\theta)dm(\theta)$ is a continuous probability
measure on $\mathbf{T}$, there are  $\alpha$ and $\beta$ with
$-\varepsilon<\alpha<\beta<\varepsilon$ such that
\begin{equation}
\label{eq3.4}
\int_{-\pi}^\alpha s(\theta)dm(\theta)\;= \:
\frac{1}{4}\; =\;\int_\beta^{\pi} s(\theta)dm(\theta).
\end{equation}
Observe that $\alpha$ and $\beta$ satisfy that
\begin{equation}
\label{eq3.5}
\begin{aligned}
-\varepsilon<\alpha<\beta \leq 0\;,&
\quad \text{if} \quad 0\leq \mu(S_\phi\cap U([0,\varepsilon)))\leq 1/4,\\
-\varepsilon<\alpha \leq 0 <\beta \;,&
\quad \text{if} \quad 1/4 < \mu(S_\phi\cap U([0,\varepsilon)))\leq 3/4,\\
0 < \alpha<\beta <\varepsilon\;,&
\quad \text{if} \quad 3/4 < \mu(S_\phi\cap U([0,\varepsilon)))\leq 1.
\end{aligned}
\end{equation}
By our assumption on $E_i$, we also observe that if
$\mu(S_\phi\cap U([0,\varepsilon)))=3/4$ or $1/4$, then
$\alpha=0$ or $\beta=0$, respectively.
It is sometimes useful to modify $\alpha$ and $\beta$ suitably.

\medskip
Let $\alpha$ and $\beta$ be as above, and let $C$ be
the circular arc from $e^{i\alpha}$ to $e^{i\beta}$ orthogonal
to the unit circle $\mathbf{T}$ lying in $\Delta$, and
put $c$ to the point in $C$ meeting the line
$\ell(t)=t\,e^{i(\alpha+\beta)/2}$ with $0\leq t \leq 1$.
We call $c$ the {\it mid-point} of the arc $C$. If $L_c(\zeta)=(\zeta+c)/(1+\bar{c}\,\zeta)$ as in \eqref{eq2.2},
then $L^{-1}_c(z)=(z-c)/(1-\bar{c}z)$.

\medskip
Let us turn to certain properties of M\"{o}bius transformations
to investigate the desired analytic discs.
The next lemma is so fundamental that we omit the proof\,:

\begin{lem}
\label{lem3.3}
Under the above hypotheses, $L^{-1}_c$ maps the closed unit disc $\overline{\Delta}=\Delta \cup \mathbf{T}$ onto itself such that
$L^{-1}_c(c)=0$ and $L^{-1}_c(e^{i\alpha})= -L^{-1}_c(e^{i\beta})$,
that is, symmetric with respect to $0$. Moreover, if $g$ is
a function in $L^1(\mathbf{T})$, then we have
\begin{equation}
\label{eq3.6}
\int_{L^{-1}_c(A)} (g\circ L_c)(\theta)\,\vert \,(L_c)'(\theta)\,\vert
\,dm(\theta)\;=\;\int_A g(\theta)\,dm(\theta)
\end{equation}
for all Borel sets $A$ in $\mathbf{T}$. Particularly, if we set
$u(\theta)=(s\circ L_c)(\theta)\,\vert\, (L_c)'(\theta)\,\vert$,
then
\begin{equation*}
\int_{L^{-1}_c(E)} (f\circ L_c)(\theta)u(\theta)\,dm(\theta)\;=\;
\int_E f(\theta)s(\theta)\,dm(\theta),\quad f \in H^\infty(\Delta),
\end{equation*}
where $E$ is the support of $s(\theta)$.
\end{lem}

\bigskip
\bigskip
\section{Construction of auxiliary Blaschke products}
\label{S4}

In this section we derive certain Blaschke products from
given ones, which play an important role in the proof of
Theorem. For a Blaschke product $B$, we denote by $Z(B)$ the
set of all zeros of $B$ repeated multiplicity for each zero.
Let us show some elementary properties related to $Z(B)$.

\begin{lem}
\label{lem4.1}
Let $\/0<\eta<1$ and let $\/\varepsilon > 0$. Then there is a
$\delta=\delta(\varepsilon,\eta) > 0$ such that, for any
Blaschke product $B$ with $Z(B)=\{z_k\}$,
the condition,
$$
\sum_{k=1}^\infty (1-\vert z_k\vert)\; <\;\delta,
$$
on $Z(B)$ implies that
\begin{equation}
\label{eq4.1}
\vert B(z)\vert\; >\; 1-\varepsilon, \quad
\text{for} \quad \vert z \vert \;\le\; \eta\,.
\end{equation}
\end{lem}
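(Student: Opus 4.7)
The plan is to show that when the sum $\sum_k(1-|z_k|)$ is small, each factor $b_k(z) = \frac{\overline{z_k}}{|z_k|}\frac{z_k-z}{1-\overline{z_k}z}$ of the Blaschke product is close to $1$ in modulus on $\{|z|\le\eta\}$, uniformly enough that the product remains close to $1$.

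First, I would start from the standard identity
\begin{equation*}
1 - |b_k(z)|^2 \;=\; \frac{(1-|z|^2)(1-|z_k|^2)}{|1-\overline{z_k}z|^2}.
\end{equation*}
For $|z|\le\eta$ we have $|1-\overline{z_k}z|\ge 1-\eta$, and $1-|z_k|^2 \le 2(1-|z_k|)$, so
\begin{equation*}
1 - |b_k(z)| \;\le\; 1 - |b_k(z)|^2 \;\le\; \frac{2(1-|z_k|)}{(1-\eta)^2}.
\end{equation*}
Thus each individual factor deviates from $1$ by at most a constant (depending only on $\eta$) times $1-|z_k|$.

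Next, I would convert the product into a sum by taking logarithms: $-\log|B(z)| = \sum_k -\log|b_k(z)|$. Choosing $\delta$ so small that $\frac{2\delta}{(1-\eta)^2} \le 1/2$, the previous bound gives $1-|b_k(z)|\le 1/2$ for every $k$, so we may use the elementary inequality $-\log(1-x)\le 2x$ on $[0,1/2]$ to obtain
\begin{equation*}
-\log|B(z)| \;\le\; 2\sum_{k=1}^\infty \bigl(1-|b_k(z)|\bigr)\;\le\; \frac{4}{(1-\eta)^2}\sum_{k=1}^\infty(1-|z_k|) \;<\; \frac{4\delta}{(1-\eta)^2}.
\end{equation*}
Hence $|B(z)|\ge \exp\!\bigl(-4\delta/(1-\eta)^2\bigr)$, and choosing $\delta=\delta(\varepsilon,\eta)$ small enough that $\exp(-4\delta/(1-\eta)^2) > 1-\varepsilon$ (for instance $\delta \le \frac{(1-\eta)^2}{4}\log\frac{1}{1-\varepsilon}$, together with $\delta\le (1-\eta)^2/4$ to secure the logarithmic estimate) yields \eqref{eq4.1}.

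There is no real obstacle here; the only subtlety is picking $\delta$ small enough to serve two purposes at once, namely to make each factor satisfy $1-|b_k(z)|\le 1/2$ so that the linear bound on $-\log(1-x)$ applies, and simultaneously to make the resulting tail $\exp(-4\delta/(1-\eta)^2)$ exceed $1-\varepsilon$. Both requirements are quantitative in $\varepsilon$ and $\eta$ alone, so the resulting $\delta$ depends only on those two parameters, independently of the particular Blaschke product $B$.
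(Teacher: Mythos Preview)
Your proof is correct and follows essentially the same strategy as the paper: bound $1-|b_k(z)|$ linearly by $C_\eta(1-|z_k|)$, convert the product to a sum via $-\log|B(z)|=\sum_k(-\log|b_k(z)|)$ and a linear estimate on $-\log t$, then exponentiate. The only cosmetic differences are that the paper obtains the linear bound from $|1-b_k(z)|\le\frac{1+\eta}{1-\eta}(1-|z_k|)$ rather than from the identity for $1-|b_k(z)|^2$, and it cites the inequality $-\log t\le(1+2\log\frac{1}{a})(1-t)$ for $a^2<t<1$ instead of your $-\log(1-x)\le 2x$ on $[0,1/2]$.
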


\begin{proof}
\, When $\vert z\vert \le \eta$, we observe that
\begin{equation}
\label{eq4.2}
1 - \left \vert \frac{z_k - z}{1 - \overline{z_k}z} \right \vert \leq
\left \vert 1 - \frac{z_k - z}{1 - \overline{z_k}z}\cdot
\frac{\vert z_k\vert}{z_k} \right \vert
\leq \frac{1+\eta}{1-\eta}\left(1-\vert z_k\vert\right).
\end{equation}
Since
$$
-\log t \le \frac{-2\log a}{1-a^2}(1-t)
\le (1+2\log\frac{1}{a})(1-t)
$$
is valid for $a^2<t<1$ (see \cite[Chapter VII, Lemma 1.2]{Ga}),
\begin{align*}
-\log \vert B(z)\vert\;&=-\sum_{k=1}^\infty \log \left \vert
\frac{z_k - z}{1 - \overline{z_k}z} \right \vert \\
&\le C_1 \sum_{k=1}^\infty \left (1 - \left \vert
\frac{z_k - z}{1 - \overline{z_k}z} \right \vert\right ) \;\le\;
C_2\sum_{k=1}^\infty \left(1-\vert z_k\vert \right)
\end{align*}
for some constants $C_1$ and $C_2$. Then we have
\begin{equation*}
\vert B(z)\vert\; \ge \;e^{-C_2\delta}\;>\; 1-\varepsilon \quad \text{for} \quad \vert z \vert \;\le\; \eta,
\end{equation*}
with sufficiently small $\delta=\delta(\varepsilon,\eta) > 0$.
\end{proof}
Recall that if $L_c(\zeta)= (\zeta + c)/(1 + \overline{c}\zeta)$
with $\vert c \vert < 1$, then
$L_c^{-1}(z)= (z -c)/(1 - \overline{c}z)$.

\begin{lem}
\label{lem4.2}
Let $L_c$ and $L_c^{-1}$ be as above, and let $B$ be a Blaschke
product with $Z(B)=\{z_k\}$. Then $B\circ L_c$ itself is the Blaschke
product with $Z(B\circ L_c)=\{L_c^{-1}(z_k)\}$. In particular, for a given
$\delta>0$, there is an $N$ such that
$$
\sum_{k=N}^\infty (1-\vert L_c^{-1}(z_k)\vert)\; <\;\delta\,.
$$
\end{lem}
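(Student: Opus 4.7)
The plan is to check two things in turn: first, that the transformed zeros $w_k := L_c^{-1}(z_k)$ satisfy $\sum_k(1-|w_k|) < \infty$, from which the tail statement follows immediately; second, that $B \circ L_c$ coincides, up to a unimodular factor, with the Blaschke product built from $\{w_k\}$, which, by the convention established right after \eqref{eq2.1}, is what it means for $B\circ L_c$ itself to be a Blaschke product with zero set $\{w_k\}$.

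For the first item I would invoke the classical identity
$$
1 - |L_c^{-1}(z_k)|^2 \;=\; \frac{(1-|c|^2)\,(1-|z_k|^2)}{|1-\overline{c}z_k|^2}\,,
$$
combined with the crude bound $|1-\overline{c}z_k| \ge 1-|c|$, to produce a constant $K=K(c)$ with $1-|w_k| \le K\,(1-|z_k|)$ for every $k$. Since $B$ is a Blaschke product we have $\sum_k(1-|z_k|) < \infty$, hence $\sum_k(1-|w_k|) < \infty$ as well; choosing $N$ so large that $\sum_{k\ge N}(1-|z_k|) < \delta/K$ gives the tail estimate $\sum_{k\ge N}(1-|w_k|) < \delta$.

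For the second item I would argue as follows. Since $L_c$ is a disc automorphism extending to a homeomorphism of $\overline{\Delta}$ carrying $\mathbf{T}$ onto $\mathbf{T}$, the composition $B\circ L_c$ is analytic on $\Delta$ with $|B\circ L_c|\le 1$ and $|B\circ L_c| = 1$ almost everywhere on $\mathbf{T}$; biholomorphicity of $L_c$ also guarantees that its zero set in $\Delta$ is precisely $\{w_k\}$ with the correct multiplicities. Let $B^\star$ denote the Blaschke product with zeros $\{w_k\}$, which converges by the previous step. The crucial observation is that the pseudo-hyperbolic distance is invariant under disc automorphisms, so for each $k$ one has $|b_{z_k}\circ L_c(\zeta)| = |b_{w_k}(\zeta)|$, where $b_a(\zeta) := (a-\zeta)/(1-\overline{a}\zeta)$. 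Summing $-\log$ of both sides over $k$ yields $|B\circ L_c(\zeta)| = |B^\star(\zeta)|$ throughout $\Delta$, so the ratio $(B\circ L_c)/B^\star$ is a zero-free analytic function on $\Delta$ of constant modulus $1$, hence a unimodular constant $e^{i\gamma}$. Therefore $B\circ L_c = e^{i\gamma} B^\star$, completing the proof.

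No step here should be a serious obstacle: the content is standard once one recognizes that the invariance of the pseudo-hyperbolic metric under $L_c$ is exactly what rules out a singular inner factor in the canonical factorization of $B\circ L_c$. The only mild subtlety is keeping track of multiplicities in the zero set, which is automatic from the biholomorphicity of $L_c$.
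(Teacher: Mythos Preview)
Your argument is correct. The first step (showing $\{L_c^{-1}(z_k)\}$ satisfies the Blaschke condition, hence the tail estimate) matches the paper's proof almost verbatim: the paper writes the same comparison $1-|L_c^{-1}(z_k)|\le \frac{1+|c|}{1-|c|}(1-|z_k|)$, which is your inequality with an explicit constant.

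The second step differs. The paper invokes the canonical inner factorization: it observes that $B\circ L_c$ is inner, and if it carried a nonconstant singular factor $S$, then $S\circ L_c^{-1}$ would be a zero-free inner function (hence singular) appearing in the factorization of $B=(B\circ L_c)\circ L_c^{-1}$, contradicting the fact that $B$ is a pure Blaschke product. You instead compute moduli directly: from the M\"obius invariance of the pseudo-hyperbolic distance you get $|b_{z_k}\circ L_c|=|b_{w_k}|$ factor by factor, multiply over $k$, and conclude $(B\circ L_c)/B^\star$ has constant modulus $1$. Your route is more elementary---it never needs the existence or uniqueness of the singular-inner factorization---while the paper's route makes transparent \emph{why} no singular piece can sneak in under a disc automorphism. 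Either way the conclusion and the level of difficulty are the same.
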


\begin{proof}
By the similar way as in the proof of (\ref{eq4.1}) we observe that
\begin{equation*}
1 - \left \vert
\frac{z_k - c}{1 - \overline{c}z_k} \right \vert
= 1 - \left \vert
\frac{z_k - c}{1 - c\overline{z_k}}\cdot
\frac{\vert z_k\vert}{z_k} \right \vert
\leq
\frac{1+\vert c \vert}{1-\vert c \vert}(1-\vert z_k\vert).
\end{equation*}
So $ \zeta_k=(z_k - c)/(1 - \overline{c}z_k)$ is a Blaschke sequence,
meaning that $\sum_{k=1}^\infty (1-\vert \zeta_k\vert)\; <\;\infty.$
On the other hand, if $S(\zeta)$ is a nonconstant singular function,
then so is $S\circ L_c^{-1}(z)$, because it has no zeros on $\Delta$.
This shows that the inner function $B\circ L_c$ cannot have a singular
factor, so $B\circ L_c$ is the Blaschke product with
$Z(B\circ L_c)=\{\zeta_k\}$, as desired.
\end{proof}
We notice that $B\circ L_c(z)$ has the form of original Blaschke product
multiplied a constant of modulus one, while $L_c\circ B (z)$ may happen
to be a singular function by the Frostman theorem
\cite[Chapter II, Theorem 6.4]{Ga}.

\medskip
Let $\{c_n\}$ be a sequence in $\Delta$ with $c_n \to 1$. For an
$\eta >0$, we denote by $K(c_n,\eta)$ the noneuclidean disc
\begin{equation*}
K(c_n,\eta)\;=\;\left\{z\in \Delta ; \rho(z,c_n)= \left \vert
\frac{z - c_n}{1 - \overline{c_n}z} \right \vert\, < \,\eta \right\}
\;=\; L_n(\{\vert \zeta \vert < \eta \}),
\end{equation*}
where $L_n $ is the map on $\Delta$ in \eqref{eq2.2}.
Then $K(c_n,\eta)$ is the euclidean disc with center
$a_n = (1 - \eta^2)c_n/(1 - \eta^2\vert c_n\vert^2)$ and radius
$r_n = \eta(1 - \vert c_n\vert^2)/(1 - \eta^2\vert c_n\vert^2)$
(see \cite[Chapter I, \S 1]{Ga}). Observe that $a_n  \to 1$
and $r_n \to 0$, as $c_n \to 1$.

Suppose $\phi$ is a homomorphism in the fiber $\mathfrak{M}_1$
outside the Shilov boundary $X$ of $H^\infty(\Delta)$. Then Newman's
theorem assures the existence of a Blaschke product $B_0$  vanishing at
$\phi$ (see \cite [Chapter 10]{H} or \cite [Chapter V, Theoem 2.2]{Ga}).
By modifying $B_0$ suitably, we construct a certain Blaschke
product $B$ with $B(\phi) = 0$ such that,
for a subsequence $\{c_{n_j}\}$ of $\{c_n\}$, the function,
$\lim_{j\to\infty}\,B\circ L_{n_j}(\zeta)$, generates
the disc algebra $A(\Delta)$, the uniform algebra on $\Delta$
generated by $G(\zeta)=\zeta$.

Let $0<\ell <1$, and let $[s,t)$ be the interval with
$\ell \le s < t \le 1$.
Then $S[s,t)$ denotes the sector
\begin{equation*}
S[s,t)\;=\;\left \{\, re^{i\theta} \, ; \, r \in [s,t),\;
\vert \theta\vert \le \frac{1-\ell}{2}\, \right \}\,.
\end{equation*}
Since each Blaschke product with zeros outside $S[s,1)$ is
continuous on $\{ re^{i\theta} \,; \,\vert \theta\vert < (1-\ell)/2\}$,
we assume $Z(B_0)$ is contained in $S[\ell,1)$, for the above $B_0$.
Notice that $S[\ell,t) \cap Z(B_0)$ is always finite whenever
$\ell < t < 1$, and that each Blaschke product with zeros
$S[t,1) \cap Z(B_0)$ has always the value $0$ at $\phi$.

\begin{lem}
\label{lem4.3}
Let $\phi, \{c_n\}$ and $L_n$ be as above. Then we may choose
a Blaschke product $B$ with $B(\phi) = 0$ such that,
for some subsequence $\{c_{n_j}\}$ of $\{c_n\}$, $B(c_{n_j}) = 0$
and $B\circ L _{n_j}(\zeta)$ converges uniformly to $G(\zeta)=\zeta$
on compact subsets of $\Delta$.
\end{lem}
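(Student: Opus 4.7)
The plan is to take $B = B_* \cdot \widetilde{B}$, where $B_*$ is a Blaschke sub-product of $B_0$ that carries the vanishing at $\phi$, and $\widetilde{B}$ is an interpolating Blaschke product whose zeros are the desired subsequence $\{c_{n_j}\}$ of $\{c_n\}$. The factor $\widetilde B \circ L_{n_j}$ will supply the single zero at the origin and tend to $\zeta$ by the mechanism in the proof of Lemma \ref{lem2.2}, while $B_* \circ L_{n_j}$ will tend to $1$ on compact subsets of $\Delta$ via Lemma \ref{lem4.1}.

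I would first fix $B_0$ from Newman's theorem, enumerate $Z(B_0)=\{z_k\}$, and choose $\eta_j\uparrow 1$ and $\varepsilon_j\downarrow 0$. The truncation principle noted just before the statement---discarding a finite initial segment of $\{z_k\}$ preserves $B_0(\phi)=0$, since the dropped factor is a finite Blaschke product, continuous and nonvanishing on $\mathfrak{M}_1$---lets me freely trim the head of $\{z_k\}$. A diagonal construction then produces $\{c_{n_j}\}$ and a strictly increasing sequence $\{M_j\}$: at step $j$ pick $M_j > M_{j-1}$, then choose $n_j > n_{j-1}$ so that (i) $\sum_{k\le M_j}(1-|L_{n_j}^{-1}(z_k)|) + \sum_{i<j}(1-|L_{n_j}^{-1}(c_{n_i})|) < \varepsilon_j/2$, which is possible because each listed point is a fixed element of $\Delta$ while $c_{n_j}\to 1$; (ii) $\{c_{n_1},\dots,c_{n_j}\}$ satisfies the step-$j$ thinness inequality making it extend to a thin interpolating sequence; and (iii) after possibly enlarging $M_j$, Lemma \ref{lem4.2} applied to the Blaschke product $B_0\circ L_{n_j}$ forces the tail $\sum_{k>M_j}(1-|L_{n_j}^{-1}(z_k)|) < \varepsilon_j/2$.

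With $B_*$ the sub-product of $B_0$ with zero set $\{z_k\}_{k>M_0}$ for an initial $M_0$ (the truncation principle guarantees $B_*(\phi)=0$) and $\widetilde B$ the interpolating Blaschke product on $\{c_{n_j}\}$, I verify $B(\phi)=B_*(\phi)\widetilde B(\phi)=0$ and, by Lemma \ref{lem4.1} applied with the combined head--tail estimate, $|B_*\circ L_{n_j}(\zeta)| > 1-\varepsilon_j$ on $\{|\zeta|\le\eta_j\}$. Normal family arguments together with the Schwarz lemma (as in the proof of Lemma \ref{lem2.2}) give $B_*\circ L_{n_j}\to \beta_j$ for unimodular $\beta_j$ and $\widetilde B\circ L_{n_j}(\zeta)\to \alpha_j\zeta$ for unimodular $\alpha_j$, uniformly on compacta. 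Passing to a further subsequence so that $\alpha_j\beta_j\to \gamma$ with $|\gamma|=1$ and absorbing $\gamma$ into the unimodular constant permitted in front of a Blaschke product, I obtain $B\circ L_{n_j}(\zeta)\to\zeta$ uniformly on compact subsets of $\Delta$.

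The main obstacle is the dual role of the diagonal step: $B_*$ must retain infinitely many zeros of $B_0$ to enforce $B_*(\phi)=0$, yet the Blaschke sum $\sum_{k>M_0}(1-|L_{n_j}^{-1}(z_k)|)$ over those infinitely many zeros must become uniformly small in $j$. Splitting into head (controlled by pushing $c_{n_j}$ toward $1$ against the fixed finite data $z_1,\dots,z_{M_j}$) and tail (controlled by Lemma \ref{lem4.2} for each individual $c_{n_j}$) resolves the tension, but threading a single cut-off sequence $\{M_j\}$ through both the thinness constraint on $\{c_{n_j}\}$ and the step-$j$ head--tail inequality is the technical crux.
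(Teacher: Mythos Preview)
Your diagonal construction has a genuine circularity that you do not resolve. At step $j$ you first fix $M_j$, then choose $n_j$ so that the \emph{head} $\sum_{k\le M_j}(1-|L_{n_j}^{-1}(z_k)|)$ is small, and only afterwards ``enlarge $M_j$'' so that the \emph{tail} $\sum_{k>M_j}(1-|L_{n_j}^{-1}(z_k)|)$ is small. But once $M_j$ is enlarged, the head estimate (i) refers to the \emph{old} cutoff, and the block $\{z_k: M_j^{\text{old}}<k\le M_j^{\text{new}}\}$ is controlled by neither inequality. That block is exactly where zeros pseudohyperbolically near $c_{n_j}$ can sit. Concretely, nothing in the hypotheses prevents $\{c_n\}\subset Z(B_0)$; if, say, $c_n=z_n$ for all $n$, then for every subsequence $\{c_{n_j}\}$ and every cofinite sub-product $B_*$ one has $B_*(c_{n_j})=0$ eventually, so $B_*\circ L_{n_j}(0)=0$ and $B_*\circ L_{n_j}$ cannot tend to $1$ on compacta. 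Thus taking $B_*$ with zero set $\{z_k\}_{k>M_0}$ is not merely technically awkward but actually fails.

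The paper confronts exactly this difficulty and resolves it by \emph{not} keeping a cofinite set of zeros. It builds an increasing system of annular sectors $S[s_j,r_j)$ and $S[r_j,s_{j+1})$, so that at stage $j$ the uncontrolled ``middle'' zeros all live in the gap $S[r_j,s_{j+1})$. Splitting $B_0=B_1B_2B_3$ according to whether a zero lies in $\cup S[s_j,r_j)$, in the odd gaps, or in the even gaps, one has $B_0(\phi)=0$, hence at least one of $B_1B_2$, $B_1B_3$ vanishes at $\phi$; the one that does has its zero set contained in every $Z(B^{(2j)})$ (respectively $Z(B^{(2j-1)})$), and for \emph{those} products the head--tail estimate is legitimate along the corresponding subsequence of $\{c_{n_j}\}$. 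In other words, the step your proposal names as ``the technical crux'' is precisely where a new idea---discarding an \emph{infinite} set of zeros while retaining the vanishing at $\phi$ via a multiplicative pigeonhole---is required, and your outline does not supply it.
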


\begin{proof}
Let $\{\varepsilon_n\}$ be a decreasing sequence of positive numbers
with $\varepsilon_n \to 0$, and let $\{\eta_n\}$ be an increasing
sequence of positive numbers with $\eta_n \to 1$.
If we put $Z(B_0)\,=\,\{z_k\}$ for the above $B_0$,
then $B_0 \circ L_n$ is a Blaschke product with
$Z(B_0 \circ L_n) = \{L_n^{-1}(z_k)\}$ by Lemma \ref{lem4.2}.
Observe that $\vert L_n^{-1}(z_k)\vert \to 1$, as $c_n \to 1$.

Let $s_1=\ell$ and $r_1 = 1- 2(1-s_1)/3= (2s_1+1)/3$. It follows from
Lemma \ref{lem4.1} that there is a $\delta_1>0 $ for which \eqref{eq4.1}
holds with $\varepsilon_1$ and $\eta_1$.
Since $S[\ell,r_1)\cap Z(B_0)$ is finite, there is a $c_{n_1}$ in $\{c_n\}$
such that
\begin{equation*}
\sum_{S[\ell,r_1)\cap Z(B_0)\ni z_k}\;\left(1-
\vert L_{n_1}^{-1}(z_k)\vert \right)
\;<\; \frac{\delta_1}{2}.
\end{equation*}
We fix such a $c_{n_1}$ in $\{c_n\}$.
Observe that, for any $\rho_1 > 0$, there is a $\rho_2 > 0$
such that $\vert L_{n_1}^{-1}(z_k)\vert > \rho_1$ whenever
$\vert z_k \vert > \rho_2 $. Hence there is an $s_2$ with
$r_1<s_2<1$ such that
\begin{equation*}
\sum_{S[s_2,1)\cap Z(B_0)\ni z_k}\;
\left(1-\vert L_{n_1}^{-1}(z_k)\vert \right)
\;<\; \frac{\delta_1}{2},
\end{equation*}
which is an infinite sum. Let $B^{(1)}$ be the
Blaschke product with zeros $z_k$
in $S[\ell,r_1)\cup S[s_2,1)$, that is, $Z(B^{(1)}) = Z(B_0)\cap (S[\ell,r_1)\cup S[s_2,1))$.
It follows from Lemmas \ref{lem4.1} and \ref{lem4.2} that
$$
\vert B^{(1)}\circ L_{n_1}(\zeta)\vert\; >\; 1-\varepsilon_1
\quad \text{for}\quad \vert \zeta \vert \;\le \;\eta_1.
$$
We then put $r_2 = (2s_2+1)/3$. By repetitions of the process
on ad infinitum, we choose the sequences $\{s_j\}$, $\{c_{n_j}\}$
and $\{B^{(j)}\}$ satisfying that, for
\begin{equation*}
\ell = s_1<r_1<s_2<r_2<\cdots <s_j<r_j<\cdots <1
\end{equation*}
with $r_j=(2s_j+1)/3$, the zero-set of $B^{(j)}$ is
$Z(B_0)\cap (S[\ell,r_j)\cup S[s_{j+1},1))$, and the
Blaschke product $ B^{(j)}\circ L_{n_j}$
satisfies
\begin{equation}
\label{eq4.3}
\vert B^{(j)}\circ L_{n_j}(\zeta)\vert\; >\; 1-\varepsilon_j
\quad \text{for}\quad \vert \zeta \vert \;\le\; \eta_j.
\end{equation}
Notice that if a Blaschke product has the zero-set contained
in $Z(B^{(j)})$, then it satisfies the same inequality \eqref{eq4.3},
and also notice that $\cup_{j=1}^{\infty} \,S[s_j,r_j)$,
$\cup_{j=1}^{\infty} \,S[r_{2j-1},s_{2j})$ and
$\cup_{j=1}^{\infty} \,S[r_{2j},s_{2j+1})$ are disjoint one another.
We then consider the three Blaschke products $B_1, B_2$
and $B_3$ whose zero-sets are given by
\begin{align*}
Z(B_1)\;&=\; Z(B_0)\cap \left(S[s_1,r_1)\cup S[s_2,r_2)
\cup \cdots\cup S[s_j,r_j)\cup \cdots \right), \\
Z(B_2)\;&=\; Z(B_0)\cap \left(S[r_1,s_2)\cup S[r_3,s_4)
\cup \cdots\cup S[r_{2j-1},s_{2j})\cup\cdots \right), \\
Z(B_3)\;&=\; Z(B_0)\cap \left(S[r_2,s_3)\cup S[r_4,s_5)
\cup \cdots\cup S[r_{2j},s_{2j+1})\cup\cdots \right),
\end{align*}
respectively. Since $ B_0(\phi) = 0$ and $B_0=B_1B_2B_3$,
we observe that
either $B_1 B_2$ or $B_1 B_3$ vanishes at $\phi$. Now let
us assume that $(B_1B_2)(\phi) = 0$, because the other case
is dealt with similarly. Since we see easily
$$
Z(B_1B_2)\;=\;\bigcap_{j=1}^{\infty}\,\left[Z(B_0)\cap \left(S[\ell,r_{2j})\cup S[s_{2j+1},1)\right)\right]
\;=\; \bigcap_{j=1}^{\infty}\,Z(B^{(2j)}),
$$
it follows that
$$
\vert (B_1B_2)\circ L_{n_{2j}}(\zeta)\vert\; >\; 1-\varepsilon_{2j}
 \quad \text{for}\quad \vert \zeta \vert \;\le \;\eta_{2j}\,,
$$
for $j=1, 2, \cdots.$
Replacing $\{c_{2j}\}$ with its suitable subsequence and
multiplying some unimodular constant, we assume that
$(B_1B_2)\circ L_{n_{2j}}(\zeta)$ converges
uniformly to the constant $1$ on compact subsets of $\Delta$.
We may also assume $\{c_{2j}\}$
is a zero-set of a thin Blaschke product $B_4$ such that
$B_4\circ L_{n_{2j+1}}(\zeta)$ converges uniformly to
$G(\zeta)=\zeta$ on compact subsets of $\Delta$.
Let us write $\{c_{n_j}\}$ for $\{c_{n_{2j}}\}$.
Then the Blaschke product $B=B_1B_2B_4$ satisfies the desired
properties.
\end{proof}

\medskip
Let us make some remarks on Lemma 4.3. When $B_1(\phi)=0$,
we may replace $B_1B_2B_4$ with $B_1B_4$ in the argument above.
It is rather easy to find such a $B$ whenever there exists an
interpolating Blaschke product $B_0$ with $B_0(\phi)=0$
(compare with \cite[Chapter IX, Lemma 3.3]{Ga}).
We next consider the relation between the boundary value
functions of $B\circ L _{n_j}(\zeta)$ and $G(\zeta)$.
Usually, $B\circ L _{n_j}(\zeta)$ has a lot of zeros near to
the boundary $\mathbf{T}$ of $\Delta$, in contrast to $G(\zeta)$.

\begin{lem}
\label{lem4.4}
Under the notation of Lemma \ref{lem4.3}, we may choose a subsequence
$\{c_{n_k}\}$ of $\{c_{n_j}\}$ such that $B\circ L _{n_k}(\theta)$
converges to $G(\theta)= e^{i\theta}$, $m$-a.e. $\theta$ in
$\mathbf{T}$, consequently, $B\circ L _{n_k}(x)$
converges to $G(x)$, $\widehat{m}$-a.e. $x$ in
$X$, where $\widehat{m}$ denotes the lifting of $m$ to $X$.
\end{lem}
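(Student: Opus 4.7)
The strategy is to promote the uniform-on-compacta convergence $B\circ L_{n_j}\to G$ furnished by Lemma \ref{lem4.3} to strong $L^2(\mathbf{T})$-convergence of the boundary values, and then to extract a subsequence converging pointwise almost everywhere.

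First, by Lemma \ref{lem4.2} each $B\circ L_{n_j}$ is itself a Blaschke product, so $|B\circ L_{n_j}(\theta)|=1$ for $m$-a.e.\ $\theta$, whence $\|B\circ L_{n_j}\|_{L^2(\mathbf{T})}=1$; likewise $\|G\|_{L^2(\mathbf{T})}=1$. Next, uniform convergence on the compact set $\{|\zeta|\le 1/2\}$ forces each Taylor coefficient of $B\circ L_{n_j}$ at $0$ to converge to the corresponding coefficient of $G=\zeta$, while every negative-index Fourier coefficient of both functions vanishes by analyticity. Pairing against trigonometric polynomials and using the uniform bound $\|B\circ L_{n_j}\|_\infty\le 1$, this yields weak convergence $B\circ L_{n_j}\rightharpoonup G$ in $L^2(\mathbf{T})$. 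Combined with the norm equalities above, the Hilbert-space identity
\begin{equation*}
\|B\circ L_{n_j}-G\|_2^2
\;=\;\|B\circ L_{n_j}\|_2^2+\|G\|_2^2-2\operatorname{Re}\langle B\circ L_{n_j},G\rangle
\;\longrightarrow\;0
\end{equation*}
upgrades weak to strong $L^2(\mathbf{T})$-convergence, and a standard extraction produces a subsequence $\{c_{n_k}\}$ with $B\circ L_{n_k}(\theta)\to e^{i\theta}$ for $m$-a.e.\ $\theta$ in $\mathbf{T}$.

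For the assertion on $X$, I would invoke the identifications recalled in the introduction: the Gelfand transform provides an isometric isomorphism $L^\infty(\mathbf{T})\cong C(X)$ with $\widehat{m}(U(E))=m(E)$, and this extends to an isometry $L^2(\mathbf{T})\cong L^2(X,\widehat{m})$. Consequently the $L^2$-convergence on $\mathbf{T}$ transfers to $L^2(X,\widehat{m})$-convergence of the lifted functions, and extracting once more (keeping the name $\{c_{n_k}\}$) yields $\widehat{m}$-a.e.\ convergence on $X$. The only step with any real content is the weak $L^2$ convergence, but this is essentially automatic once the Taylor coefficients are under control, thanks to the uniform bound $\|B\circ L_{n_j}\|_\infty\le 1$; the rest is a routine Hilbert-space and extraction argument.
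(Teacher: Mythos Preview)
Your proof is correct and follows essentially the same approach as the paper: both establish $L^2(\mathbf{T})$-convergence of $B\circ L_{n_j}$ to $G$ via the Hilbert-space structure and then extract an a.e.\ subsequence. The only cosmetic difference is that the paper computes $\|B\circ L_{n_j}-G\|_2^2$ directly via Parseval, using that $a_0^{(j)}=B(c_{n_j})=0$ and $a_1^{(j)}=(B\circ L_{n_j})'(0)\to 1$, whereas you phrase the same computation as ``weak convergence plus convergence of norms implies strong convergence.''
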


\begin{proof}
Since each Blaschke product $B\circ L _{n_j}$ lies in
$L^2(\mathbf{T})$, its Taylor series shows that
$$
B\circ L _{n_j}(\theta) \,=\, \sum_{k=0}^{\infty}\, a_k^{(j)}
e^{ik\theta}\quad \text{and} \quad
\Vert B\circ L _{n_j}\Vert^2_2\,=\,\sum_{k=0}^{\infty}\, \vert
a_k^{(j)}\vert^2 \,=\,1\,.
$$
It follows from Lemma \ref{lem4.3} and the Weierstrass theorem
of double series that $a_1^{(j)} = (B\circ L _{n_j})'(0)$
tends to $G'(0)=1$. Since $a_0^{(j)}=B\circ L _{n_j}(0)
= B(c_{n_j})=0$, we have
$$
\Vert B\circ L _{n_j} - G \Vert^2_2 \,=\, \vert a_1^{(j)} - 1\vert^2 + \sum_{k=2}^{\infty}\, \vert
a_k^{(j)}\vert^2 \rightarrow 0, \quad j \to \infty.
$$
Thus, we may choose a subsequence $\{c_{n_k}\}$ of $\{c_{n_j}\}$
such that $B\circ L _{n_k}(\theta)$
converges to $G(\theta)= e^{i\theta}$, $m$-a.e. $\theta$ in
$\mathbf{T}$.
\end{proof}

\bigskip
\bigskip
\section{Existence of desired sequences}
\label{S5}
Let $\phi$ be a homomorphism in
$\mathfrak{M}(H^\infty)\setminus \Delta$,
and let $f_1, f_2, \cdots, f_N $ be functions in
$H^\infty(\Delta)$. What should be shown is the existence of a
sequence $\{\zeta_j\}$ in $\Delta$ for which $\lim_{j\to \infty} f_k(\zeta_j)
= f_k(\phi)$ for $k=1,2,\cdots, N$. By Lemma \ref{lem2.1}, it suffices to
consider the case where $\phi$ lies in $\mathfrak{M}_1\setminus X$.
So the representing measure $\mu$ for $\phi$ is a continuous
measure, and its minimal support $S_\phi$ is a compact subset of
$\mathfrak{M}_1\cap X$. Then we know that there exists
a Blaschke product vanishing at $\phi$ in $\mathfrak{M}(H^\infty)$.

Let $\{\varepsilon_n\}$ be a decreasing sequence of positive numbers with
$\varepsilon_n \to 0$, and let $s_n(\theta)\,=\,\sum_{i=1}^{\ell^{(n)}}
a_i^{(n)} \chi_{E_i^{(n)}}(\theta)$
denote the simple function determined by Lemma \ref{lem3.1},
of which support $E^{(n)} = \cup_{i=1}^{\ell^{(n)}}\, E_i^{(n)}$
is contained in $(-\varepsilon_n, \varepsilon_n)$.
Then there are
$\alpha_n$ and $\beta_n$ with $-\varepsilon_n < \alpha_n
< \beta_n < \varepsilon_n$ satisfying the properties of
\eqref{eq3.4} and \eqref{eq3.5}.
Let $c_n=t_n\,e^{i(\alpha_n+\beta_n)/2}$ in $\Delta$ be the mid-point
of the circular arc $C_n$ from $e^{i\alpha_n}$ to $e^{i\beta_n}$ orthogonal to
$\mathbf{T}$. Observe that $c_n \to 1$, as $n\to\infty$. We now
choose a thin interpolating subsequence $\{c_{n_j}\}$ of $\{c_n\}$
for which a Blaschke product $B$ with $B(\phi)=0$ has the
property of Lemma \ref{lem4.3}. For simplicity of notation, we write $\{\varepsilon_j\}, \{s_j(\theta)\}$ and $\{c_j\}$ for
$\{\varepsilon_{n_j}\}, \{s_{n_j}(\theta)\}$ and $\{c_{n_j}\}$,
respectively.

Let $\mathfrak{F}^\sharp\,= \{f_1, f_2, \cdots, f_N\,\}\cup \{1, z, B \,\}$
with the Blaschke product $B$ above, and let $B(\mathfrak{F}^\sharp)$
be the uniformly closed subalgebra of $H^\infty(\Delta)$ generated by
$\mathfrak{F}^\sharp$ as in Section 3. Let $L_j$ be the map $L_j(\zeta)=(\zeta+c_j)/(1+\overline{c_j}\zeta)$ as usual.
Replacing $\{c_j\}$ with a suitable subsequence
if necessary, we may assume, by normal families argument, that
$f_k\circ L_j(\zeta)$ converges uniformly
to $F_k(\zeta)$ on compact subsets of $\Delta$, as $j \to \infty$.
Since each function in $\{1, z, B \,\}$ also has the same property,
it holds on all $B(\mathfrak{F}^\sharp)$, that is, if $h$ is in
$B(\mathfrak{F}^\sharp)$, then there is an $H$ in $H^\infty(\Delta)$ such that
\begin{equation}
\label{eq5.1}
\lim_{j\to\infty}\, h\circ L_j(\zeta)\;=\; H(\zeta)
\end{equation}
uniformly on compact subsets of $\Delta$.
Modifying the equation, we obtain the following:

\begin{lem}
\label{lem5.1}
Let $B$ be the Blaschke product with the property of Lemma \ref{lem4.3}.
Under the notation of \eqref{eq5.1}, here are some properties:

{\rm (a)} \,We have
\begin{equation*}
\lim_{j\to\infty}\, h\circ L_j(B\circ L_j(\zeta))
\;=\;
H(\zeta)
\;=\;
\lim_{j\to\infty}\, (H\circ B)\circ L_j(\zeta)
\end{equation*}
uniformly on compact subsets of $\Delta$. Moreover, we also observe that
\begin{equation*}
\lim_{j\to\infty}\, \left\vert \,h\circ L_j(\zeta) -
(H\circ B)\circ L_j(\zeta)\, \right\vert
\;=\;0
\end{equation*}
uniformly on compact subsets of $\Delta$.

\medskip
{\rm (b)} \,Regarding $H$ as a function in $L^\infty(\mathbf{T})$,
we may choose a sequence $\{p_n\}$ of polynomials with \;
$\Vert p_n \Vert_\infty \leq \Vert H \Vert_\infty$ such that
$\lim_{n\to\infty}\, \Vert \, p_n - H \,\Vert_1 \,=\, 0$,
consequently,
\begin{equation*}
\lim_{n\to\infty}\, p_n(\zeta)\,=\,H(\zeta)
\end{equation*}
uniformly on compact subsets of $\Delta$.
\end{lem}

\begin{proof}
The first equation of (a) is a direct consequence of \eqref{eq5.1} and the
property of $B$. The second one follows from the inequality
\begin{equation*}
\left\vert \,h\circ L_j(\zeta) -
(H\circ B)\circ L_j(\zeta)\, \right\vert
\; \leq \;
 \left\vert \,h\circ L_j(\zeta) - H(\zeta)\, \right\vert +
 \left\vert H(\zeta) - (H\circ B)\circ L_j(\zeta)\,\right\vert.
\end{equation*}
On the other hand, it is known that the Cesaro means $p_n$ of the
Fourier (Taylor) series of $H$ converges to $H$ in
$L^1(\mathbf{T})$ (see \cite[16p]{H}). Since $H$ is bounded on $\Delta$,
we observe that
$\Vert p_n \Vert_\infty \leq \Vert H \Vert_\infty$. Hence, the
conclusion of (b) follows from Poisson integral formulas for
analytic functions on $\Delta$.
\end{proof}

\medskip
As we shall see later, $H(\zeta)$ is identified with the
restriction of $h$ or $H\circ B$ in $H^\infty(\Delta)$
to a suitable homeomorphic part in $\mathfrak{M}_1$
(see the remarks below).

\medskip
Moreover, it follows from Lemma \ref{lem3.2} that there
is a sequence $\{s^\sharp_j(\theta)\}$ of simple functions with
supports $F^{(j)}$ satisfying \eqref{eq3.3} on $B(\mathfrak{F}^\sharp)$.
We especially have
\begin{equation}
\label{eq5.2}
\phi(B)^n \;=\;\phi(B^n)\;=\; \lim_{j \to \infty}\int_{F^{(j)}}\,
B(\theta)^n \,s^\sharp_j(\theta)\,dm(\theta)\,,\quad
n = 0, 1, 2, \cdots,
\end{equation}
which vanishes for each $n \geq 1$.

\medskip
Let $M(X)$ be the space of all regular complex Borel measures on
$X=\mathfrak{M}(L^\infty)$. Then the Riesz representation theorem
shows that $M(X)$ is identified with the dual space of
$C(X)=L^\infty(\mathbf{T})$. Since each $s^\sharp_j(\theta)\, dm(\theta)$
extends to a probability measure $s^\sharp_j(x)\,d\widehat{m}(x)$
on $X$, we may choose an adherent point $\widehat{\nu}$ of
$\{s^\sharp_j(x)\,d\widehat{m}(x)\}$ in $M(X)$ in the weak*-topology.
Then $\widehat{\nu}$ is a representing measure for the homomorphism
which is the restriction of $\phi$ to $B(\mathfrak{F}^\sharp)$.
Indeed, we have
\begin{equation}
\label{eq5.3}
\begin{aligned}
h(\phi)\;&=\; \lim_{j\to \infty} \int_{-\pi}^{\pi}
h(\theta) s^\sharp_j(\theta)\, dm(\theta)\\
&=\;\lim_{j\to \infty} \int_X
h(x) s^\sharp_j(x)\, d\widehat{m}(x)\\
&=\;\int_X\,h(x)\,d\widehat{\nu}(x)
\end{aligned}
\end{equation}
for all $h$ in $B(\mathfrak{F}^\sharp)$, because Lemma \ref{lem3.2}
assures the existence of above limits on $B(\mathfrak{F}^\sharp)$.
It would be a crucial point that the sequence of
$\{s^\sharp_j(\theta)dm(\theta)\}$ may be replaced by the one of
Poisson integrals for functions in $B(\mathfrak{F}^\sharp)$.

\medskip
Let $\mathbf{N}^\infty = \mathbf{N} \cup \{\infty\}$ be the
one-point compactification of all positive integers $\mathbf{N}$,
and let $H^\infty(\Delta\times\mathbf{N}^\infty)$ be the space
of all bounded sequences $g(\zeta,j)$, for $j$ in $\mathbf{N}^\infty$,
of analytic functions on $\Delta$ such that $g(\zeta,j)$
converges to $g(\zeta,\infty)$ uniformly on compact subsets of
$\Delta$. It then follows from Fatou's theorem that each function
$\zeta \to g(\zeta,j)$ extends to the closed unit disc
$\overline{\Delta} = \Delta\cup \mathbf{T}$, of which the boundary
function is denoted by $g(\theta,j)$. As a sequence $\{g(\theta,j)\}$
in $L^\infty(\mathbf{T})=L^1(\mathbf{T})^\ast$, the dual space of
$L^1(\mathbf{T})$, it is easy to see that $g(\theta,j)$ converges
to $g(\theta,\infty)$ in the weak*-topology
( compare with Lemma \ref{lem4.4}). Let $u_j(\theta)
= s_j^\sharp\circ L_j(\theta) \,\vert (L_j)'(\theta)\,\vert$.
Then Lemma \ref{lem3.3} and
\eqref{eq5.3} assure that the following equation
\begin{equation}
h(\phi)\; =\; \lim_{j\to \infty} \int_{-\pi}^{\pi}
h(\theta)\,s^\sharp_j(\theta) dm(\theta)
\;=\;\lim_{j\to \infty} \int_{-\pi}^{\pi}
h\circ L_j(\theta)\, u_j(\theta)\, dm(\theta)
\label{eq5.4}
\end{equation}
holds on $B(\mathfrak{F}^\sharp)$. We have to represent the
measure $\widehat{\nu}$ in \eqref{eq5.3} as the one on
$\mathbf{T}\times \{\infty\}$.

\bigskip
We are now ready for the proof of Theorem.

\medskip
\noindent
{\itshape Proof of Theorem.\;}
Let us define a closed subalgebra $(SB)(\mathfrak{F}^\sharp)$
of $H^\infty(\Delta\times\mathbf{N}^\infty)$. With the above notation,
for each function $h$ in $B(\mathfrak{F}^\sharp)$, we put
\begin{equation}
g(\zeta,j)=
\begin{cases}
\; h\circ L_j(\zeta)\,, & \;j \in \mathbf{N}, \\
\; H(\zeta)\,, & \;j = \infty,
\end{cases}
\label{eq5.5}
\end{equation}
in the equation \eqref{eq5.1}. Then $g(\zeta,j)$ is an element of
$H^\infty(\Delta\times\mathbf{N}^\infty)$. We set $(SB)(\mathfrak{F}^\sharp)$
to be the space of all such functions $g(\zeta,j)$ by \eqref{eq5.1}.
Observe that $(SB)(\mathfrak{F}^\sharp)$ is a uniformly closed subalgebra
of $H^\infty(\Delta\times\mathbf{N}^\infty)$. We may regard
$u_j(\theta)dm(\theta)=
s_j^\sharp\circ L_j(\theta) \,\vert (L_j)'(\theta)\,\vert dm(\theta)$
as a probability measure on $\mathbf{T}\times \mathbf{N}^\infty$
concentrated on $\mathbf{T}\times \{j\}$.

Let $\mathbf{H}(\Delta)$ denote the restriction of $(SB)(\mathfrak{F}^\sharp)$
to $\Delta\times \{\infty\}$. Then if $H$ lies in $\mathbf{H}(\Delta)$, then
there is an $h$ in $B(\mathfrak{F}^\sharp)$ such that
$H(\zeta) = \lim_{j\to\infty}\, h\circ L_j(\zeta)$.
We claim that the value $h(\phi)$ may be represented as
\begin{equation}
h(\phi)\; =\; H(0) \; =\; \int_{-\pi}^{\pi} H(\theta)\,dm(\theta)\,,
\label{eq5.6}
\end{equation}
where $H(\theta)$ is the boundary value function of $H(\zeta)$.
Indeed, recall that the Blaschke product $B$ in $B(\mathfrak{F}^\sharp)$
satisfies that $B\circ L_j(\zeta)$ converges to $G(\zeta)=\zeta$
uniformly on compact subsets of $\Delta$, so does
$p(B\circ L_j)(\zeta)$ to $p(G)(\zeta)=p(\zeta)$, for each
polynomial $p$ on $\Delta$. Since $p(B)$ lies in $B(\mathfrak{F}^\sharp)$,
the function,
\begin{equation*}
g(\zeta,j)=
\begin{cases}
\; p(B\circ L_j)(\zeta)\,, & \;j \in \mathbf{N}, \\
\; p(\zeta)\,, & \;j = \infty,
\end{cases}
\end{equation*}
is an element of $(SB)(\mathfrak{F}^\sharp)$,
the space $\mathbf{H}(\Delta)$ contains all polynomials.
Since the disc algebra $A(\Delta)$ is the uniform closure
of polynomials, the subalgebra $\mathbf{H}(\Delta)$ of
$H^\infty(\Delta)$ also contains $A(\Delta)$. Recall that we have
$B(\phi) = 0 = G(0)$ by the property of Lemma \ref{lem4.3}.
Then \eqref{eq5.2} and \eqref{eq5.4} show that
$P\circ B(\phi) = P(G(0)) = P(0)$.

On the other hand, for each $H$ in $\mathbf{H}(\Delta)$,
there is a sequence $\{p_n\}$ of polynomials with the
property (b) of Lemma \ref{lem5.1}, so $p_n(0) \to H(0)$.
Since
\begin{align*}
\lim_{j\to \infty} \int_{-\pi}^{\pi}
\left(h\circ L_j(\theta)-p_n(B\circ L_j)(\theta)\,\right)\,
u_j(\theta)\, dm(\theta) \;&=\;h(\phi) - p_n\circ B(\phi)\\
&=\;h(\phi) - p_n(0),
\end{align*}
we have
\begin{equation*}
\vert h(\phi) - H(0) \vert \leq \vert h(\phi) - p_n\circ B(\phi) \vert +
\vert p_n(0) - H(0) \vert \rightarrow 0\,,
\end{equation*}
as $n\to \infty$. Thus \eqref{eq5.6} holds for all
$h$ in $B(\mathfrak{F}^\sharp)$.

Let $\zeta_j = L_j(0) = c_j$. We show that the sequence $\{\zeta_j \}$
in $\Delta$ satisfies the property \eqref{eq1.2} for $f_1, f_2, \cdots, f_N $. Indeed, for all $h$ in $B(\mathfrak{F}^\sharp)$,
Lemma \ref{lem5.1} implies that
\begin{equation*}
H(\zeta)\;=\; \lim_{j\to\infty}\,h\circ L_j (B\circ L_j(\zeta)),
\end{equation*}
uniformly on compact subsets of $\Delta$. Hence, since $B\circ L_j(0)=0$,
it follows from \eqref{eq5.6} that
\begin{equation*}
h(\phi)\;=\;H(0)\;=\;\lim_{j\to\infty}\,h\circ L_j(0)
\;=\;\lim_{j\to\infty}\,h(\zeta_j).
\end{equation*}
Since each $f_k$ lies in $B(\mathfrak{F}^\sharp)$, we obtain
$\lim_{j\to \infty} f_k(\zeta_j) = f_k(\phi)$. Therefore,
by virtue of Lemma \ref{lem2.3}, the property \eqref{eq1.2} holds
on the uniform algebra $B(\mathfrak{F})$ generated by
a countable family $\mathfrak{F}$ of $H^\infty(\Delta)$,
as desired.
\hfill $\square$

\medskip

Some remarks are in order on the above proof.
Let us explain the relation between $\Delta\times \{\infty\}$ and
a homeomorphic part in $\mathfrak{M}_1\setminus \Delta$, which
reveals the structure of $\mathbf{H}(\Delta)$. Since $\{c_j\}$ is
a thin interpolating sequence in $\Delta$, any adherent point $\xi$
of $\{c_j\}$ lies in the homeomorphic part $P(\xi)$ in
$\mathfrak{M}_1\setminus \Delta$. Let $L= \lim_\beta L_\beta$ be
the Hoffman map with $L(0)=\xi$ by Lemma \ref{lem2.2},
where $(L_\beta)$ is a convergent subnet of $\{L_j\}$. Then
$\mathbf{H}(\Delta)$ is the algebra of all $h\circ L(\zeta)$
for $h$ in $B(\mathfrak{F}^\sharp)$. So \eqref{eq5.6} holds for
$H(\zeta)= h\circ L(\zeta)$, by regarding $H(\zeta)$ as a function
on $\Delta$. Although $\xi$ and
$\phi$ are usually different, the values $h(\phi)$ and $h(\xi)$
coincide for all $h$ in $B(\mathfrak{F}^\sharp)$.
Moreover we see that the measure $\widehat{\nu}$
in \eqref{eq5.3} may be regarded as a representing
measure $\widehat{m}$ on homeomorphic part
$P(\xi)$, and the sequence of $u^\sharp_j(\theta) dm(\theta)$
may be replaced by the one of Poisson kernels $P_{c_j}$
on $\mathbf{T}$.
Since $B(\phi) = 0 = G(0)$, $H \to h(\phi)$ is the evaluation
homomorphism at $0$ for $A(\Delta)$, which extends uniquely to the one on $H^\infty(\Delta)$, so does for $\mathbf{H}(\Delta)$. Thus it seems to be
natural that \eqref{eq5.6} holds.
However we do not know whether $\mathbf{H}(\Delta)$ is uniformly closed,
although it is strictly smaller than $H^\infty(\Delta)$.
This fact follows easily from Lusin's theorem, since
$\mathbf{H}(\Delta)$ is the algebra generated by
$A(\Delta)$ and finite set $\{F_k(\zeta)\}$
with $F_k(\zeta)=\lim_{j\to \infty} f_k\circ L_j(\zeta)$.

\bigskip
\bigskip
\section{Remarks}
\label{S3}

(a) \; As far as we restrict our attention to analytic discs
in $\mathfrak{M}(H^\infty)\setminus \Delta$, it is rather
easy to show that each of them belongs to the closure of
$\Delta$ in $\mathfrak{M}(H^\infty)$.  Indeed, let $P(\phi)$
be a nontrivial Gleason part in $\mathfrak{M}_1$, and let $\mu$
be the representing measure for $\phi$ on the Shilov boundary
$X$. Denote by $H^p(\mu), 1\le p <\infty,$ the closure of
$H^\infty(\Delta)$ in $L^p(\mu)$. Then Wermer's embedding theorem
assures the existence of an inner function $Z$ in $H^2(\mu)$ such
that $Z$ has a bijective extension $\widehat{Z}$ to $P(\phi)$ with
$\widehat{Z}(\phi)=0$, with which $\tau(z)= \widehat{Z}^{-1}(z)$
is an analytic map on $\Delta$, meaning that $f\circ\tau(z)$ is
analytic on $\Delta$ for all $f$ in $H^\infty(\Delta)$, and
\begin{equation}
\label{eq6.1}
f(\xi)\;=\;\sum_{n=0}^{\infty}\, a_n \widehat{Z}^n(\xi)\;,
\quad \xi \in P(\phi),
\end{equation}
(see, for example, \cite[Chapter 6, \S 6.4]{L}). Since $Z$ is an
inner function in $H^2(\mu)$, there is a sequence $\{q_i\}$ in
$H^\infty(\Delta)$ such that $\vert q_i(x) \vert \leq 1$ on $X$
and $\Vert q_i - Z \Vert_{L^2(\mu)} \to 0,$ as $i\to\infty$
(see the proof of \cite[Chapter II, Theorem 7.2]{G1}).
Let $f_1,f_2, \cdots, f_N $ be in $H^\infty(\Delta)$, and put
$\mathfrak{F}\,=\, \{f_1,f_2, \cdots, f_N \}\,\cup\,\{\, q_i
\,;\, i = 1,2, \cdots \}$. Let $\varepsilon >0$, and denote by
$f_k\circ \tau(z)=\sum_{n=0}^{\infty}\, a_n^{(k)}\,z^n$ the Taylor
expansion of $f_k\circ \tau$ on $\Delta$. By \eqref{eq6.1} we
choose a $q_i$ in $\{q_i\}$ and an integer $\ell_k \ge 0$ such that
\begin{equation*}
\left \vert \phi(f_k)\;-\;\sum_{n=0}^{\ell_k}\, a_n^{(k)}
q_i^n(\phi) \right \vert \;<\; \varepsilon, \quad k = 1,2, \cdots, N.
\end{equation*}
It follows from Lemmas \ref{lem3.2} and \ref{lem3.3} that we find a
thin interpolating sequence $\{c_j\}$ for which the maps $L_j$ by
\eqref{eq2.2} satisfy that there is a sequence $\{u_j(\theta)\}$ of
nonnegative functions such that $\int u_j(\theta)\, dm(\theta) =1$ and
\begin{equation*}
\left\vert \phi(f) - \int_{-\pi}^\pi f\circ L_j(\theta)u_j(\theta)
\,dm(\theta)\right \vert \;\to \; 0\,, \quad j\to \infty,
\end{equation*}
for all $f$ in $\mathfrak{F}$. Taking a subsequence of $\{c_j\}$
suitably, we may assume that each $f$ in $\mathfrak{F}$ satisfies
$f\circ L_j(\zeta)$ converges uniformly on compact subsets in $\Delta$.
Let $L$ be the Hoffman map by a convergent subnet of $\{L_j\}$ in
$\mathfrak{M}(H^\infty)^\Delta$. Then $P_1=L(\Delta)$ is an
analytic disc homeomorphic to $\Delta$. We then see that
$e^{i\gamma_j} q_j\circ L(\zeta)$, with some real $\gamma_j$,
converges to $G(\zeta)=\zeta$, so the measure
$u_j(\theta)\,dm(\theta)$ converges to the representing measure
$dm(\theta)$ at $\zeta=0$. Then the sequence $\{c_j\}$ in $\Delta$
has the property that $f(c_j)\to f(\phi)$ for $f$ in $\mathfrak{F}$,
consequently, we see that
$$
W(\phi, f_1,\cdots, f_N, \varepsilon)\cap \Delta\,\neq\,\emptyset,
$$
for any $\varepsilon >0$.

By a similar argument as above, it enables us to show Hoffman's
characterization of analytic discs by interpolating sequences.
Under the notation above, let $\mathfrak{F}_m = \{q_1, q_2, \cdots, q_m\}$.
Then there is a thin interpolating sequence $\{\zeta^{(m)}_j\}$ in
$\Delta$ and a homeomorphic part $P(\phi^{(m)})$ such that
$\lim_{j\to\infty} g(\zeta^{(m)}_j) = g(\phi^{(m)})=g(\phi)$ for any $g$
in the algebra generated by $\mathfrak{F}_m$. It suffices to consider
the case where $\{\zeta^{(m)}_j\}\cap \{\zeta^{(k)}_j\}=\emptyset$
if $m\neq k$. With the aide of diagonal argument, we choose an
interpolating sequence $\{\zeta_j\}$ so that
$$
\{\zeta_j\}\;=\;\{\zeta^{(m)}_{j_m}, \zeta^{(m)}_{j_m+1},
\cdots ,\zeta^{(m)}_{j_{m+1}-1} \;;\;
m=1, 2, \cdots \},
$$
for which such $\phi^{(m)}$ may be regarded as an adherent
point for $\{\zeta_j\}$ in $\mathfrak{M}(H^\infty)$.
Modifying the argument in Lemma \ref{lem3.3}, we may see
that $P(\phi)$ lies in the closure
of $\cup_{m=1}^\infty P(\phi^{(m)})$ in $\mathfrak{M}(H^\infty)$.
Thus $\{\zeta_j\}$ is a desired interpolating sequence.
Conversely, if $\phi$ lies in the closure of an interpolating
sequence $\{\zeta_j\}$, then we see easily that $\phi$ lies
in a nontrivial Gleason part $P(\phi)$. Then by a similar
argument as above, we find a Hoffman map $L$ by converging
subnet of $\{L_j\}$ such that $P(\phi)=L(\Delta)$. For our
characterization of analytic discs, full details and further
developments will appear elsewhere.

\medskip
(b) \; We may represent concretely a large portion of the fiber
$\mathfrak{M}_1$ by a continuous flow. It is useful to study
representing measures in connection with invariant measures.
For analyticity in ergodic theory, we refer to
\cite{M1}, \cite{M2} and \cite{P}. Let $H^\infty(\mathbf{R}^2_+)$
be the space of all bounded analytic functions on the upper
half-plane ${\bf R}^2_+$. Then
$H^\infty({\bf R}^2_+)$ is identified with $H^\infty(\Delta)$
via the conformal map $z(w) =(w-i)/(w+i)$. Setting
$g(w)\,=\,f(z(w))$ for $f$ in $H^\infty(\Delta)$, we should
investigate the behavior of $g(w)$ around at infinity to look
into the structure of $\mathfrak{M}_1$. Let $\beta \mathbf{Z}$
be the Stone-\v{C}ech compactification of the integer group
$\mathbf{Z}$. Then the shift operator $S_0\,n\,=\,n+1$ on
$\mathbf{Z}$ extends to a homeomorphism $S$ on $\beta\mathbf{Z}$.
Let $\mathbf{X}$ be the quotient space obtained from
$\beta {\bf Z}\times [0,1]$ by identifying $(y,1)$ with $(Sy,0)$.
By regarding the real line $\mathbf{R}$ as $\mathbf{Z}\times [0,1)$
in $\mathbf{X}$, the translation on $\mathbf{R}$ induces a
continuous flow $(\mathbf{X},\{S_t\}_{t\in\mathbf{R}})$ by
\begin{equation*}
S_t(y,s)\;=\;(S^{[s+t]}y, s+t-[s+t]),\;\quad (y,s)\in \mathbf{X},
\end{equation*}
where $[t]$ denotes the largest integer not exceeding $t$. We write
$\mathbf{x}$ for $(y,s)$ in $\mathbf{X}$, and the translate
$S_t\,\mathbf{x}$ is denoted by $\mathbf{x}+t$. Let $A(\mathbf{X})$
be the uniform algebra of all functions $f$ in $C(\mathbf{X})$
satisfying that each $t\to f(\mathbf{x}+t)$ lies in
$H^\infty(\mathbf{R})$, the space of all boundary value functions
in $H^\infty(\mathbf{R}^2_+)$. Then $A(\mathbf{X})$ is a logmodular
algebra on $\mathbf{X}$ whose maximal ideal space is identified with
a certain quotient space of $\mathbf{X}\times [0,\infty]$. Recall
that the Poisson kernel $P_{ir}$ for ${\bf R}^2_+$ is defined by $P_{ir}(t)=r/\pi(t^2+r^2)$.
For a bounded Borel function $g$ on $\mathbf{X}$, we put
\begin{equation*}
g(\mathbf{x},r)\,=\, g\ast P_{ir}(\mathbf{x})\,=\,
\int_{-\infty}^\infty g(\mathbf{x}+t) P_{ir}(t)dt,\quad
(\mathbf{x},r) \in \mathbf{X}\times (0,\infty).
\end{equation*}
This decides the representing measures for $A(\mathbf{X})$ on
$\mathbf{X}\times (0,\infty)$, while representing measures
lying in $\mathbf{X}\times \{\infty\}$ are invariant
measures being multiplicative on $A(\mathbf{X})$ (see \cite{M2},
\cite{T1}, \cite{T2} for representing measures for $A(\mathbf{X})$ ).
Denote by $H^\infty(\mathbf{X})$ the algebra of all bounded Borel
functions $g$ for which $\mathbf{x}\to g(\mathbf{x},r)$ lies in
$A(\mathbf{X})$ for each $r>0$. Since $H^\infty(\mathbf{X})$ is
isometrically isomorphic to $H^\infty(\Delta)$, the subset
$\mathbf{X}\times (0,\infty]\setminus \mathbf{R}^2_+$ represents
a portion of the fiber $\mathfrak{M}_1$, from which we observe
immediately that either \textit{nontangential} point or
\textit{orocycular} point is in the closure of an interpolating
sequence (compare with \cite[Chapter X, Excersises 1 and 2]{Ga}).
Let $\mathbf{M}$ be a minimal set in
$(\mathbf{X},\{S_t\}_{t\in\mathbf{R}})$ (see \cite{P} for minimal
sets). Observe that each $\textit{O}(\mathbf{x})\times (0,\infty)$
corresponds to an analytic disc, where $\textit{O}(\mathbf{x})$
denotes the orbit $\{\mathbf{x}+t ;\, t \in \mathbf{R}\}$. If
$\mathbf{x}$ is in $\mathbf{M}$, then the analytic disc by
$\textit{O}(\mathbf{x})\times (0,\infty)$ is never homeomorphic to $\Delta$.
We also see that every representing measure on $\mathbf{M}$ not being
point mass has the same support set $\mathbf{M}$, on which there are
many representing measures. Since $\mathbf{M}$ is an intersection
of peak sets, the restriction $A_\mathbf{M}$ of $A(\mathbf{X})$ to
$\mathbf{M}$ is a uniform algebra equipped with many interesting
properties (see \cite{M1}, \cite{M2} and \cite{T3} for more details).

\bigskip
\bigskip

\end{document}